\newtheorem{theorem}{Theorem}
\newtheorem{cor}[theorem]{Corollary}
\newtheorem{prop}[theorem]{Proposition}
\newtheorem{lemma}[theorem]{Lemma}
\newtheorem{definition}[theorem]{Definition}
\newtheorem{ex}[theorem]{Example}
\newtheorem{problem}[theorem]{Problem}
\newcommand{\evenp}{{\rm evenp\,}}
\newcommand{\zero}{{\rm zerop\,}}
\newcommand{\st}{{\rm singleton\,}}
\newcommand{\child}{{\rm child\,}}
\newcommand{\SSS}{\mathcal{S}}
\newcommand{\M}{{\rm M\,}}
\newcommand{\leaff}{{\rm Leaf\,}}
\newcommand{\Child}{{\rm Child\,}}
\newcommand{\asc}{{\rm asc\,}}
\newcommand{\CPK}{{\rm CPK\,}}
\newcommand{\cpke}{{\rm cpk^{e}\,}}
\newcommand{\cpko}{{\rm cpk^{o}\,}}
\newcommand{\sech}{\ensuremath{\mathrm{sech\ }}}
\newcommand{\sn}{{\rm sn\,}}
\newcommand{\cn}{{\rm cn\,}}
\newcommand{\dn}{{\rm dn\,}}
\newcommand{\des}{{\rm des\,}}
\newcommand{\NAP}{\mathcal{NAP}}
\newcommand{\msn}{\mathfrak{S}_n}
\newcommand{\lrf}[1]{\lfloor #1\rfloor}
\title{On the unimodality of the Taylor expansion coefficients of Jacobian elliptic functions}
\author[S.-M.~Ma]{Shi-Mei Ma}
\address{School of Mathematics and Statistics,
        Northeastern University at Qinhuangdao,
         Hebei 066004, P.R. China}
\email{shimeimapapers@163.com (S.-M. Ma)}
\author[J. Ma]{Jun Ma}
\address{Department of mathematics, Shanghai jiao tong university, Shanghai, P.R. China}
\email{majun904@sjtu.edu.cn (J. Ma)}
\author[Y.-N. Yeh]{Yeong-Nan Yeh}
\address{Institute of Mathematics,
        Academia Sinica, Taipei, Taiwan}
\email{mayeh@math.sinica.edu.tw (Y.-N. Yeh)}
\author[R.R. Zhou]{Roberta R. Zhou}
\address{School of Mathematics and Statistics,
        Northeastern University at Qinhuangdao,
         Hebei 066004, P.R. China}
\email{zhourui@neuq.edu.cn (R.R. Zhou)}
\subjclass[2010]{Primary 05A05; Secondary 05A15}
\begin{document}
\begin{abstract}
The Jacobian elliptic functions are standard forms of elliptic functions, and they were independently introduced by C.G.J. Jacobi and N.H. Abel.
In this paper, we study the unimodality of Taylor expansion coefficients
of the Jacobian elliptic functions $\sn(u,k)$ and $\cn(u,k)$. By using the theory of $\gamma$-positivity, we obtain that the Taylor expansion coefficients of
$\sn(u,k)$ are symmetric and unimodal, and that of $\cn(u,k)$ are unimodal and alternatingly increasing.
\end{abstract}

\keywords{Jacobian elliptic functions; Unimodality; Gamma-positivity; Permutationss}

\maketitle

\section{Introduction}
Elliptic integrals were first investigated in works of scholars at the end of the 17th century to the beginning of the 19th century: L. Euler, A. Legendre and C.G.J. Jacobi,
The {\it elliptic integral of the first kind} is given as follows:
$$u=\int_{0}^{x}\frac{d t}{\sqrt{(1-t^2)(1-k^2t^2)}},$$
where $k\in (0,1)$ is the {\it modulus}.
The {\it Jacobian elliptic function} $\sn(u,k)$ is the inverse to this elliptic integral, i.e., $x=\sn(u,k)$.
This inversion problem was solved independently by C.G.J. Jacobi~\cite{Jacobi} and, in a slightly different form, by N.H. Abel.
The other two Jacobian elliptic functions are defined by
$\cn(u,k)=\sqrt{1-\sn^2(u,k)}$ and $\dn(u,k)=\sqrt{1-k^2\sn^2(u,k)}$.

The three Jacobian elliptic functions are
connected by the differential system
\begin{equation}\label{Abel-diff}
\left\{
  \begin{array}{ll}
    \frac{d}{du}\sn(u,k)=\cn(u,k)\dn(u,k), \\
\frac{d}{du}\cn(u,k)=-\sn(u,k)\dn(u,k),\\
\frac{d}{du}\dn(u,k)=-k^2\sn(u,k)\cn(u,k).
  \end{array}
\right.
\end{equation}
When $k=0$ or $k=1$, the Jacobian elliptic functions degenerate into trigonometric or hyperbolic functions:
\begin{align*}
\sn(u,0)&=\sin u,~\cn(u,0)=\cos u,~\dn(u,0)=1,\\
\sn(u,1)&=\tanh u,~\cn(u,1)=\dn(u,1)=\sech u.
\end{align*}
These functions appear in a variety of problems in physics and engineering,
and they have been extensively studied in mathematical physics, algebraic geometry, combinatorics and number
theory (see~\cite{Carlson,Conrad02,Ismail98,Viennot80} for instance).

Following Viennot~\cite{Viennot80},
we define $J_n(x)$ as the Taylor expansion coefficients of the Jacobian elliptic functions, i.e.,
\begin{align*}
\sn(u,k)&=\sum_{n\geq 0}(-1)^nJ_{2n+1}(k^2)\frac{u^{2n+1}}{(2n+1)!},\\
\cn(u,k)&=1+\sum_{n\geq 1}(-1)^nJ_{2n}(k^2)\frac{u^{2n}}{(2n)!},\\
\dn(u,k)&=1+\sum_{n\geq 1}(-1)^nk^{2n}J_{2n}(1/k^2)\frac{u^{2n}}{(2n)!}.
\end{align*}
It is well known that $\deg J_n(x)=\lrf{(n-1)/2}$ (see~\cite{Viennot80} for instance).
The first few $J_n(x)$ are listed as follows:
\begin{align*}
J_1(x)&=J_2(x)=1,~J_3(x)=1+x,~J_4(x)=1+4x,\\
J_5(x)&=1+14x+x^2,~J_6(x)=1+44x+16x^2,\\
J_7(x)&=1+135x+135x^2+x^3,~J_8(x)=1+408x+912x^2+64x^3.
\end{align*}

Let $\operatorname{D}_J$ be the derivative operator, acting on commuting variables $\{x,y,z\}$, that is given by
\begin{equation}\label{diff-elliptic}
\operatorname{D}_J=yz\frac{\partial}{\partial x}+xz\frac{\partial}{\partial y}+xy\frac{\partial}{\partial z}.
\end{equation}
Following an approach due to Schett for computing the
Taylor expansion coefficients of Jacobian elliptic functions~\cite{Schett76},
Dumont~\cite{Dumont79} obtained a connection between $\operatorname{D}_J^n(x)$ and peak statistics of permutations.
In~\cite{Viennot80}, Viennot presented a combinatorial interpretation of $J_n(x)$, and by using~\eqref{Abel-diff}, he also provided
several convolution formulas for $J_n(x)$. In~\cite{Flajolet89}, Flajolet and Fran\c{c}on gave another combinatorial interpretation of
$J_n(x)$ by using continued fractions.

Let us now recall a classical result of Viennot~\cite{Viennot80}.
\begin{prop}[{\cite[Remarque~15]{Viennot80}}]
The polynomial $J_{2n+1}(x)$ is symmetric for any $n\geq 0$.
\end{prop}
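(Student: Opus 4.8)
The plan is to recast ``symmetric'' as a palindrome relation on the coefficients and then read that relation off from a \emph{reciprocal-modulus} functional equation for $\sn(u,k)$. Writing $J_{2n+1}(x)=\sum_{i=0}^{n}a_i x^i$ and recalling from the excerpt that $\deg J_{2n+1}(x)=\lrf{(2n+1-1)/2}=n$, the claim that $J_{2n+1}$ is symmetric is exactly the statement $a_i=a_{n-i}$ for all $i$, equivalently the single polynomial identity
\begin{equation*}
J_{2n+1}(x)=x^{n}J_{2n+1}(1/x).
\end{equation*}
Thus it suffices to produce a functional equation for $\sn$ that, after comparing Taylor coefficients, yields $J_{2n+1}(k^2)=k^{2n}J_{2n+1}(1/k^2)$; putting $x=k^2$ then completes the argument.

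The functional equation I would establish is
\begin{equation*}
\sn(ku,1/k)=k\,\sn(u,k).
\end{equation*}
It follows directly from the defining integral: if $x=\sn(u,k)$, then $u=\int_0^x dt/\sqrt{(1-t^2)(1-k^2t^2)}$, and the substitution $s=kt$ turns this into
\begin{equation*}
ku=\int_0^{kx}\frac{ds}{\sqrt{(1-s^2)(1-s^2/k^2)}},
\end{equation*}
which is precisely the relation expressing $kx=\sn(ku,1/k)$, since the right-hand integral is the modulus-$(1/k)$ elliptic integral evaluated at $kx$. The version I would actually write out differentiates instead: from \eqref{Abel-diff} the function $y=\sn(u,k)$ satisfies $(y')^2=(1-y^2)(1-k^2y^2)$ with $y(0)=0$ and $y'(0)=1$, and a one-line substitution shows that $w(v):=k\,\sn(v/k,k)$ obeys $(w')^2=(1-w^2)(1-w^2/k^2)$ with $w(0)=0$ and $w'(0)=1$, whence uniqueness of the power-series solution identifies $w(v)$ with $\sn(v,1/k)$.

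It then remains to expand both sides of the transformation as Taylor series in $u$ via the definition of the $J_n$. Matching the coefficient of $u^{2n+1}/(2n+1)!$ (the factors $(-1)^n$ cancel) gives $k^{2n+1}J_{2n+1}(1/k^2)=k\,J_{2n+1}(k^2)$, that is $J_{2n+1}(k^2)=k^{2n}J_{2n+1}(1/k^2)$. Substituting $x=k^2$ produces the desired palindrome identity, and hence the symmetry of $J_{2n+1}$.

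The one point that genuinely requires care is the meaning of $\sn(\cdot,1/k)$ when $k\in(0,1)$, since the reciprocal modulus $1/k$ lies outside the range $(0,1)$. I would dispose of this by running the entire argument at the level of (convergent) power series: the equation $(y')^2=(1-y^2)(1-m^2y^2)$ with $y(0)=0$, $y'(0)=1$ has a unique solution whose coefficients are polynomials in $m^2$, so the substitution is an identity of power series valid for \emph{every} value of $m^2$, and the resulting relation between $J_{2n+1}(x)$ and $J_{2n+1}(1/x)$ is an honest identity of polynomials. Equivalently, $J_{2n+1}(k^2)=k^{2n}J_{2n+1}(1/k^2)$ holds for all $k\in(0,1)$, and two polynomials agreeing on an interval agree identically.
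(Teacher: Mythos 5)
Your proof is correct, but it follows a genuinely different route from the paper. The paper never reproves this proposition directly: it quotes Viennot and then establishes the stronger Theorem~\ref{thm01} ($\gamma$-positivity of $J_{2n+1}(x)$), whose proof runs through Dumont's combinatorial model for the coefficients $s_{n,i,j}$ (cycle peaks of permutations, equivalently the operator $\operatorname{D}_J$), a change of variables producing the operator $\operatorname{D}_G$, and the recurrences of Lemma~\ref{thm2}; symmetry then falls out of the expansion \eqref{J2n1gamma}, since each term $x^j(1+x)^{n-2j}$ is palindromic about $n/2$. You instead prove exactly the symmetry statement by analytic means: the reciprocal-modulus transformation $\sn(ku,1/k)=k\,\sn(u,k)$, verified either from the defining integral by the substitution $s=kt$ or from the differential equation $(y')^2=(1-y^2)(1-k^2y^2)$ derived from \eqref{Abel-diff}, and then comparison of Taylor coefficients to get $J_{2n+1}(x)=x^nJ_{2n+1}(1/x)$, which with $\deg J_{2n+1}=n$ is precisely palindromicity. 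Your handling of the only delicate point --- that $1/k\notin(0,1)$, so $\sn(\cdot,1/k)$ must be interpreted through the power-series solution whose coefficients are polynomials in the squared modulus --- is sound, and the final passage from an identity on $k\in(0,1)$ to a polynomial identity is correct. What each approach buys: yours is short, self-contained, and close in spirit to the classical transformation theory of elliptic functions, but it yields only symmetry; the paper's machinery is much heavier but delivers nonnegativity of the $\gamma$-coefficients (hence unimodality, which symmetry alone does not give), combinatorial interpretations of those coefficients (Corollary~\ref{Cor15}), and the tools needed for the non-symmetric even case $J_{2n}(x)$ in Theorem~\ref{thm02}, where a transformation argument like yours has no obvious analogue.
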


This paper is motivated by the following problem.
\begin{problem}
For any $n\geq 0$, whether the polynomial $J_n(x)$ is unimodal?
\end{problem}

Let $f(x)=\sum_{i=0}^nf_ix^i\in \mathbb{R}[x]$.
The polynomial $f(x)$ {\it unimodal} if there exists an index $m$ such that $f_0\leq f_1\leq\cdots \leq f_{m}\geq f_{m+1}\geq\cdots\geq f_n$.
We say that $f(x)$ is {\it alternatingly increasing} if
$$f_0\leq f_n\leq f_1\leq f_{n-1}\leq\cdots f_{\lrf{\frac{n+1}{2}}}.$$
Clearly, alternatingly increasing property is a stronger property than unimodality.
We say that $f(x)$ is {\it symmetric}
if $f_i=f_{n-i}$ for $0\leq i\leq n$, and the number $\lrf{n/2}$ is called the center of symmetry.
If $f(x)$ is a symmetric polynomial, then it can be expanded as
$$f(x)=\sum_{k=0}^{\lrf{{n}/{2}}}\gamma_kx^k(1+x)^{n-2k}.$$ The vector $(\gamma_0,\gamma_1,\ldots,\gamma_{\lrf{n/2}})$ is known as the {\it $\gamma$-vector} of $f(x)$.
We say that $f(x)$ is {\it $\gamma$-positive} if $\gamma_k\geq 0$ for all $0\leq k\leq \lrf{n/2}$ (see~\cite{Branden08,Lin15,Ma19}).
It is clear that $\gamma$-positivity of $f(x)$ implies that $f(x)$ is unimodal and symmetric.
The $\gamma$-coefficients of $\gamma$-positive polynomials often have nice combinatorial interpretations.
See~\cite{Athanasiadis17,Lin15} for a recent
comprehensive survey on this subject.

We now present the first main result of this paper.
\begin{theorem}\label{thm01}
The polynomial $J_{2n+1}(x)$ is $\gamma$-positive for any $n\geq 0$. Thus $J_{2n+1}(x)$ is symmetric and unimodal.
\end{theorem}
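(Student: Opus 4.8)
The plan is to prove $\gamma$-positivity directly from the context-free grammar underlying $\operatorname{D}_J$, namely $x\to yz$, $y\to xz$, $z\to xy$, of which $\operatorname{D}_J$ is exactly the formal derivative. The key structural observation I would establish first is that this grammar \emph{closes} on the subalgebra generated by $x$ together with the symmetric combinations $u=y^2+z^2$ and $v=yz$: a one-line computation gives $\operatorname{D}_J(x)=v$, $\operatorname{D}_J(v)=x(y^2+z^2)=xu$, and $\operatorname{D}_J(u)=2y\,(xz)+2z\,(xy)=4xv$. Thus $\operatorname{D}_J$ restricts to the derivation of the reduced grammar $x\to v,\ v\to xu,\ u\to 4xv$ on $\mathbb{R}[x,u,v]$. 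Since every rule of this reduced grammar has a nonnegative coefficient, iterating $\operatorname{D}_J$ starting from $x$ preserves nonnegativity of all coefficients, so $\operatorname{D}_J^{N}(x)\in\mathbb{N}[x,u,v]$ for every $N$. This manifest nonnegativity is the engine of the argument.

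Next I would pin down how $J_{2n+1}$ is read off from $\operatorname{D}_J^{2n+1}(x)$. Following Schett~\cite{Schett76} and Dumont~\cite{Dumont79}, the Taylor coefficients of $\sn(u,k)$ are generated by the genuine elliptic derivation $L=yz\,\partial_x-xz\,\partial_y-k^2xy\,\partial_z$ through $L^{2n+1}(x)\big|_{(x,y,z)=(0,1,1)}=(-1)^nJ_{2n+1}(k^2)$, which one verifies against the tabulated values. The passage from $L$ to $\operatorname{D}_J$ is pure bookkeeping of signs and powers of $k^2$: if a monomial $x^ay^bz^c$ of $\operatorname{D}_J^{N}(x)$ is produced using the $y$-rule $q=(a+c-1)/2$ times and the $z$-rule $r=(a+b-1)/2$ times, then it carries the factor $(-1)^{q+r}k^{2r}$ in $L^{N}(x)$. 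Specializing to $a=0$, $N=2n+1$ yields the clean identity $y\,J_{2n+1}(y^2)=\operatorname{D}_J^{2n+1}(x)\big|_{x=0,\,z=1}$. Because the $x=0$ part of $\operatorname{D}_J^{2n+1}(x)$ consists only of monomials $y^bz^c$ with $b,c$ both odd, dividing by $v=yz$ leaves a symmetric polynomial in $y^2,z^2$; hence $\operatorname{D}_J^{2n+1}(x)\big|_{x=0}=\sum_{k}\gamma_k\,u^{\,n-2k}v^{\,2k+1}$ for suitable coefficients $\gamma_k$.

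Finally I would substitute the values forced by $y^2=t,\ z^2=1$, namely $u\mapsto 1+t$ and $v^2\mapsto t$, into $\operatorname{D}_J^{2n+1}(x)\big|_{x=0}/v$. This turns the previous display into precisely $J_{2n+1}(t)=\sum_k\gamma_k\,t^k(1+t)^{n-2k}$, so the $\gamma$-coefficients of $J_{2n+1}$ are \emph{literally} the $(u,v)$-coefficients occurring in $\operatorname{D}_J^{2n+1}(x)\big|_{x=0}$. The grammar nonnegativity from the first step then forces $\gamma_k\ge 0$, and symmetry together with unimodality follow from $\gamma$-positivity as recorded in the introduction. (As a by-product the reduced grammar reproves Viennot's symmetry, since $u$ and $v$ are symmetric in $y,z$.)

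The genuinely delicate point is the second step: justifying the transition from the sign-laden elliptic operator $L$ to the sign-free $\operatorname{D}_J$, that is, proving that the accumulated sign $(-1)^{q+r}$ and power $k^{2r}$ depend only on the terminal monomial and not on the particular derivation path that produced it. Once this combinatorial bookkeeping is verified, the positivity itself is immediate, because the entire difficulty has been absorbed into writing $\operatorname{D}_J$ in the variables $x,u,v$, where its grammar has nonnegative coefficients by inspection.
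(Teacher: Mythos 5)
Your proposal is correct and is essentially the paper's own argument: your reduced grammar $x\to v$, $v\to xu$, $u\to 4xv$ on $\mathbb{R}[x,u,v]$ is exactly the operator $\operatorname{D}_G$ used to prove Lemma~\ref{thm2} (take $u=a+b$ and $v=c$, since $\operatorname{D}_G(a+b)=4xc$ and $\operatorname{D}_G(c)=x(a+b)$), and your specialization $x=0$, $u\mapsto 1+t$, $v^2\mapsto t$ reproduces the paper's derivation of \eqref{J2n1gamma}. The only cosmetic difference is that you re-derive the identity $y\,J_{2n+1}(y^2)=\operatorname{D}_J^{2n+1}(x)\big|_{x=0,\,z=1}$ by tracking signs and powers of $k^2$ through the elliptic derivation, where the paper simply cites Dumont's result (Lemma~\ref{Dumont}).
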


We now recall an elementary result.
\begin{prop}[{\cite{Beck2015,Branden18}}]\label{prop01}
Let $f(x)$ be a polynomial of degree $n$.
There is a unique symmetric decomposition $(a(x),b(x))$ such that $f(x)= a(x)+xb(x)$, where $a(x)$ and $b(x)$ are symmetric polynomials
satisfying $a(x)=x^n a(\frac{1}{x})$ and $b(x)=x^{n-1}b(\frac{1}{x})$. More precisely,
\begin{equation*}\label{ax-bx-prop01}
a(x)=\frac{f(x)-x^{n+1}f(1/x)}{1-x},~b(x)=\frac{x^nf(1/x)-f(x)}{1-x}.
\end{equation*}
\end{prop}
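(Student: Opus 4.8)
The plan is to treat this proposition as an existence-plus-uniqueness statement: I would first verify that the explicitly given pair $(a(x),b(x))$ does the job, and then show no other pair can. To begin, I would confirm that the two formulas actually produce polynomials. The numerator $f(x)-x^{n+1}f(1/x)$ defining $a(x)$ vanishes at $x=1$, since both terms equal $f(1)$ there, so $1-x$ divides it; the same reasoning applies to the numerator $x^nf(1/x)-f(x)$ of $b(x)$. Next I would check the decomposition itself by placing both expressions over the common denominator $1-x$. The terms $\pm x^{n+1}f(1/x)$ cancel, and one is left with $(f(x)-xf(x))/(1-x)=f(x)$, so $f(x)=a(x)+xb(x)$ as required.

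The more delicate computations are the two symmetry identities, and I expect these to be the main place where sign or exponent errors can creep in. For $a(x)=x^na(1/x)$, I would substitute $1/x$ into the formula for $a$, rewrite the denominator $1-1/x$ as $(x-1)/x$, and then multiply through by $x^n$; after clearing the resulting powers of $x$ the expression collapses back to $a(x)$, the sign flip from $x-1$ to $1-x$ being exactly what is needed. The verification that $b(x)=x^{n-1}b(1/x)$ is entirely parallel, the only change being the exponent $n-1$ in place of $n$.

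For uniqueness, I would suppose $(a,b)$ and $(\widetilde a,\widetilde b)$ both satisfy all the stated conditions and set $p=a-\widetilde a$ and $q=b-\widetilde b$. Because every defining condition is linear, $p$ and $q$ inherit the symmetries $p(x)=x^np(1/x)$ and $q(x)=x^{n-1}q(1/x)$, while subtracting the two decompositions gives $p(x)+xq(x)=0$, that is, $p=-xq$. Applying the symmetry of $p$ and then that of $q$ yields
$$p(x)=x^np(1/x)=-x^{n-1}q(1/x)=-q(x),$$
so together with $p=-xq$ we obtain $-q(x)=-xq(x)$, hence $q(x)(1-x)=0$. Since the polynomial ring is an integral domain, this forces $q=0$, and then $p=-xq=0$, so the decomposition is unique.
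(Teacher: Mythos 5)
Your proof is correct in all its parts: the divisibility check at $x=1$ shows both quotients are genuine polynomials, the cancellation argument gives $f(x)=a(x)+xb(x)$, the two substitution computations verify the symmetry identities $a(x)=x^na(1/x)$ and $b(x)=x^{n-1}b(1/x)$, and the uniqueness argument via linearity ($p=-xq$ combined with the inherited symmetries forcing $q(x)(1-x)=0$ in the integral domain $\mathbb{R}[x]$) is clean and complete. Note, however, that the paper itself offers no proof of this proposition at all: it is quoted as a known elementary fact from Beck--Robins and Br\"and\'en--Solus and immediately put to use in Definition~\ref{def-bi} and Theorem~\ref{thm02}. So there is no internal argument to compare against; what your write-up supplies is a self-contained verification of exactly the statement the paper takes on faith, which is the natural (and essentially the standard) way to prove it. The only stylistic point worth making explicit is that identities like $p(1/x)=-(1/x)q(1/x)$ are manipulations of rational functions (or Laurent polynomials), obtained by substituting $1/x$ into a polynomial identity; this is routine, but saying so would make the uniqueness step airtight.
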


When $f(x)$ is symmetric, we have $a(x)=f(x)$ and $b(x)=0$.
As pointed out by Br\"and\'en and Solus~\cite{Branden18},
if $(a(x),b(x))$ is the symmetric decomposition of $f(x)$,
then $f(x)$ is alternatingly increasing if and only if $a(x)$ and $b(x)$ are both unimodal
and have nonnegative coefficients.
We now introduce the following definition.
\begin{definition}\label{def-bi}
Let $(a(x),b(x))$ be the symmetric decomposition of $f(x)$. If $a(x)$ and $b(x)$ are both $\gamma$-positive, then we say that
$f(x)$ is bi-$\gamma$-positive.
\end{definition}

Note that bi-$\gamma$-positivity of $f(x)$ implies that $f(x)$ is alternatingly increasing.
The second main result of this paper is the following.
\begin{theorem}\label{thm02}
The polynomial $J_{2n}(x)$ is bi-$\gamma$-positive for any $n\geq 0$. Thus $J_{2n}(x)$ is alternatingly increasing and unimodal.
\end{theorem}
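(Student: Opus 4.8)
The plan is to track the symmetric decomposition $(a_n(x),b_n(x))$ of $J_{2n}(x)$ furnished by Proposition~\ref{prop01} and to prove by induction on $n$ that both $a_n(x)$ and $b_n(x)$ are $\gamma$-positive; by Definition~\ref{def-bi} this is exactly bi-$\gamma$-positivity of $J_{2n}(x)$, whence the alternatingly increasing and unimodal conclusions follow from the remarks preceding Definition~\ref{def-bi}. The engine of the induction will be a pair of coupled recurrences that express $a_n$ and $b_n$ as nonnegative combinations of products involving the polynomials $J_{2i+1}(x)$, which are already $\gamma$-positive by Theorem~\ref{thm01}.

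First I would record the two convolution identities obtained by extracting the coefficient of $u^{2n-1}/(2n-1)!$ from the second and third equations of the differential system~\eqref{Abel-diff}, namely
\begin{align*}
J_{2n}(x)&=\sum_{i+l=n-1}\binom{2n-1}{2i+1}J_{2i+1}(x)\,x^{l}J_{2l}(1/x),\\
x^{n-1}J_{2n}(1/x)&=\sum_{i+m=n-1}\binom{2n-1}{2i+1}J_{2i+1}(x)\,J_{2m}(x).
\end{align*}
The second identity is the key structural observation: since $\deg J_{2n}=n-1$, Proposition~\ref{prop01} gives $a_n(x)+b_n(x)=x^{n-1}J_{2n}(1/x)$, while $a_n(x)+xb_n(x)=J_{2n}(x)$ by definition. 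Substituting $J_{2l}=a_l+xb_l$ and (for $l\geq1$) $x^{l}J_{2l}(1/x)=x(a_l+b_l)$ into the two convolutions, subtracting, and simplifying isolates the pair
\begin{align*}
a_n(x)&=J_{2n-1}(x)+x\sum_{i+l=n-1,\,l\geq1}\binom{2n-1}{2i+1}J_{2i+1}(x)\,b_l(x),\\
b_n(x)&=\sum_{i+l=n-1,\,l\geq1}\binom{2n-1}{2i+1}J_{2i+1}(x)\,a_l(x),
\end{align*}
a computation I would present but not belabor, after checking the base cases $a_0=a_1=1$ and $b_0=b_1=0$ directly.

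The induction then runs on two elementary closure facts. A product of two $\gamma$-positive symmetric polynomials is again $\gamma$-positive, with degrees adding; and if $g$ is $\gamma$-positive and symmetric with $g(x)=x^{d}g(1/x)$, then $xg(x)$ is $\gamma$-positive and symmetric with $xg(x)=x^{d+2}(xg)(1/x)$, since $xg=\sum_k\gamma_k x^{k+1}(1+x)^{(d+2)-2(k+1)}$. Tracking degrees, $J_{2i+1}$ is symmetric of degree $i$, while inductively $a_l$ and $b_l$ are symmetric of degrees $l-1$ and $l-2$; hence each summand $J_{2i+1}a_l$ in the $b_n$-recurrence is symmetric of degree $n-2$ and each summand $xJ_{2i+1}b_l$ in the $a_n$-recurrence is symmetric of degree $n-1$, matching the required symmetry of $b_n$ and $a_n$. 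Combining this with Theorem~\ref{thm01} for the factors $J_{2i+1}$ and the induction hypothesis for $a_l,b_l$, both $a_n$ and $b_n$ emerge as nonnegative combinations of $\gamma$-positive symmetric polynomials of the correct degree, hence are themselves $\gamma$-positive.

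The main obstacle is the middle step: organizing the two convolutions so that the stray factors of $x$ and the reversals cancel cleanly to produce recurrences whose right-hand sides are manifestly nonnegative and land in the correct symmetric classes. The delicate point is the lone factor of $x$ in the $a_n$-recurrence, which at first sight breaks symmetry; it is precisely the degree shift $d\mapsto d+2$ under multiplication by $x$ that repairs it, and verifying that this bookkeeping is consistent across both recurrences (including the vanishing $l=1$ contributions, where $b_1=0$) is where the real care is needed. Once the recurrences are in hand the $\gamma$-positivity is a routine induction, and bi-$\gamma$-positivity of $J_{2n}(x)$ follows.
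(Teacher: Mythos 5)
Your proposal is correct and follows essentially the same route as the paper: induction on $n$ using Viennot's convolution (Lemma~\ref{Viennot}), with the two parts of the symmetric decomposition of $J_{2n}$ expressed as nonnegative combinations of products of the $\gamma$-positive polynomials $J_{2i+1}$ (Theorem~\ref{thm01}) with the parts $a_l,b_l$ of smaller even-index polynomials---indeed your recurrences for $a_n,b_n$ coincide exactly with the paper's $A_{2m+2}$ and $B_{2m+2}$. The only cosmetic difference is that you pin down the decomposition by pairing the convolution with a second identity for $x^{n-1}J_{2n}(1/x)$ (extracted from the $\dn$ equation) and subtracting, whereas the paper reads off the same decomposition directly from the single convolution after substituting $x^{l}J_{2l}(1/x)=x\bigl(a_l(x)+b_l(x)\bigr)$.
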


In the next section, we give an overview of some
results that have surfaced in the process of trying to understand $J_n(x)$.
\section{Notation and Preliminaries}
Let $\operatorname{D}_J$ be given in~\eqref{diff-elliptic}.
It follows easily by induction that there exist polynomials $S_n(p,q,r)$ of degree $\lrf{n/2}$, homogeneous in the variables $p,q,r$ such that
\begin{equation}\label{recurrence001}
\operatorname{D}_J^{2n}(x)=xS_{2n}(x^2,y^2,z^2),~\operatorname{D}_J^{2n+1}(x)=yzS_{2n+1}(x^2,y^2,z^2).
\end{equation}
More precisely, there exist nonnegative integers $s_{n,i,j}$ such that
\begin{equation}\label{recurrence01}
\begin{gathered}
\operatorname{D}_J^{2n}(x)=\sum_{i,j\geq 0}s_{2n,i,j}x^{2i+1}y^{2j}z^{2n-2i-2j},\\
\operatorname{D}_J^{2n+1}(x)=\sum_{i,j\geq 0}s_{2n+1,i,j}x^{2i}y^{2j+1}z^{2n-2i-2j+1}.
\end{gathered}
\end{equation}
Thus
\begin{equation}\label{recurrence0001}
S_n(p,q,r)=\sum_{i,j\geq 0}s_{n,i,j}p^{i}q^{j}r^{n-i-j}.
\end{equation}
In order to compute the Taylor expansion coefficients of Jacobian elliptic functions,
Schett~\cite{Schett76} initiated the study of the numbers $s_{n,i,j}$ in a slightly different form. Subsequently, Dumont~\cite[Eq.~(2)]{Dumont79} derived the following recurrence system:
\begin{equation*}\label{snijrecurrence}
\begin{gathered}
s_{2n,i,j}=(2j+1)s_{2n-1,i,j}+(2i+2)s_{2n-1,i+1,j-1}+(2n-2i-2j+1)s_{2n-1,i,j-1},\\
s_{2n+1,i,j}=(2i+1)s_{2n,i,j}+(2j+2)s_{2n,i-1,j+1}+(2n-2i-2j+2)s_{2n,i-1,j}.
\end{gathered}
\end{equation*}

Let $\msn$ be the symmetric group of all permutations of the set $[n]=\{1,2,\ldots,n\}$ and
let $\pi=\pi(1)\pi(2)\cdots\pi(n)\in\msn$.
A value $i\in[n]$ is called a {\it cycle peak} of $\pi$ if $\pi^{-1}(i)<i>\pi(i)$.
Let $\CPK^o(\pi)$ and $\CPK^e(\pi)$ denote the set of odd and even cycle peaks of $\pi$, respectively.
For example, if $\pi=(1,3,5)(2,8,4,7,6,9)$, then $\CPK^o(\pi)=\{5,7,9\}$ and $\CPK^e(\pi)=\{8\}$.
Let $\cpko(\pi)=\#\CPK^o(\pi)$ and let $\cpke(\pi)=\#\CPK^e(\pi)$.
The number of peaks in a permutation is an important
statistic in algebraic combinatorics. See, e.g.,~\cite{Dilks09,Stembridge97} and the references therein.

We now recall a remarkable result of Dumont.
\begin{prop}[{\cite{Dumont79}}]\label{prop01}
We have
\begin{equation}\label{Dumont-com}
s_{n,i,j}=\#\{\pi\in\msn\mid \cpko(\pi)=i, \cpke(\pi)=j\}.
\end{equation}
\end{prop}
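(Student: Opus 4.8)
The plan is to prove~\eqref{Dumont-com} by showing that the right-hand side, as a function of $N,i,j$, obeys Dumont's recurrence system recorded just above the proposition, together with the same initial condition; since that recurrence determines the numbers uniquely, the two families coincide. Write $a_{N,i,j}=\#\{\pi\in\mathfrak{S}_N : \cpko(\pi)=i,\ \cpke(\pi)=j\}$. For the base case $N=1$ the only permutation is the identity, which has no cycle peaks, so $a_{1,0,0}=1$ and all other $a_{1,i,j}$ vanish, matching $\operatorname{D}_J(x)=yz$.

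The engine of the proof is the cycle-insertion bijection: every $\pi\in\mathfrak{S}_N$ arises from a unique $\sigma\in\mathfrak{S}_{N-1}$ by inserting the largest letter $N$ into one of $N$ slots, namely either as a new fixed point or immediately after some $j\in[N-1]$ in its cycle (subdividing the arc $j\to\sigma(j)$ into $j\to N\to\sigma(j)$); deletion of $N$, by bypassing it, is the inverse. Since $N$ is largest, a newly inserted $N$ that is not a fixed point is always a cycle peak, and its parity — even when $N=2n$, odd when $N=2n+1$ — governs whether $\cpke$ or $\cpko$ is incremented. The heart of the matter is to track the other changes. First I would note that inserting $N$ after $j$ can destroy the peak status only of $j$ (when the arc is a descent, i.e.\ $j$ is a peak) or of $\sigma(j)$ (when the arc is an ascent and $\sigma(j)$ is a peak), never both, since $j>\sigma(j)$ and $j<\sigma(j)$ are incompatible. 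Consequently each cycle peak $p$ of $\sigma$ is destroyed by exactly the two slots immediately after $p$ and immediately before $p$, while all remaining slots create the peak $N$ with no compensating loss.

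Counting the slots by the parity of the (possibly) destroyed peak then yields, for $N=2n$,
\begin{equation*}
a_{2n,i,j}=(2j+1)a_{2n-1,i,j}+(2i+2)a_{2n-1,i+1,j-1}+(2n-2i-2j+1)a_{2n-1,i,j-1},
\end{equation*}
where $2j+1$ collects the lone fixed-point slot together with the $2j$ slots that destroy an even peak (leaving both counts unchanged); $2i+2=2(i+1)$ counts the slots destroying an odd peak while creating the even peak $N$; and the last factor counts the loss-free slots. The odd case $N=2n+1$ is entirely analogous, with the roles of $i$ and $j$ interchanged, and reproduces the second line of the recurrence system. These are precisely Dumont's recurrences for $s_{N,i,j}$, so by induction $a_{N,i,j}=s_{N,i,j}$ for all $N$.

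The main obstacle is the bookkeeping in the second paragraph: one must verify the disjointness of the two ways a peak can be destroyed, confirm that the peak-destroying slots are pairwise distinct across different peaks (so that each of the $\cpko(\sigma)+\cpke(\sigma)$ peaks contributes exactly two slots), and check that the number of loss-free slots is exactly $N-1-2(\cpko(\sigma)+\cpke(\sigma))$, which is the source of the linear coefficients $2n-2i-2j+1$ and $2n-2i-2j+2$. A careful case analysis according to the type of $j$ (peak, valley, double rise, double fall, or fixed point) before insertion settles each of these points.
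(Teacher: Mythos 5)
Your proposal is correct, but note that the paper itself contains no proof of this proposition: it is stated as a quoted result, cited to Dumont's 1979 paper, so there is no internal argument to compare against. Your route---defining $a_{N,i,j}=\#\{\pi\in\mathfrak{S}_N \mid \cpko(\pi)=i,\ \cpke(\pi)=j\}$ and showing these numbers satisfy the Schett--Dumont recurrence displayed just above the proposition, via the bijection that inserts the largest letter $N$ either as a fixed point or immediately after some $j$ in its cycle---is essentially Dumont's original argument, and your bookkeeping is right: an insertion after $j$ makes $N$ a cycle peak unconditionally, can destroy the peak status of at most one of $j$ or $\sigma(j)$ (the two conditions $j>\sigma(j)$ and $j<\sigma(j)$ being incompatible), each existing peak $p$ is killed by exactly the two distinct slots after $p$ and after $\sigma^{-1}(p)$, and these slot pairs are disjoint across distinct peaks, which yields the coefficients $2j+1$, $2i+2$, $2n-2i-2j+1$ (and their analogues in the odd case) exactly as in the paper's recurrence. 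One point you leave implicit but should state when executing the case analysis: an insertion can never \emph{create} a peak among the old letters, since the only letters whose neighbours change are $j$ and $\sigma(j)$, and their new neighbour $N$ exceeds both; this is what guarantees the ``loss-free'' slots leave the old peak counts untouched. With that observation added, the induction on $N$ (base case $a_{1,0,0}=s_{1,0,0}=1$, recurrence determining level $N$ from level $N-1$) closes the proof.
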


We now define $$P_n(p,q)=\sum_{\pi\in\msn}p^{\cpko(\pi)}q^{\cpke(\pi)}.$$
Combining~\eqref{recurrence0001} and~\eqref{Dumont-com}, we see that $$P_n(p,q)=S_n(p,q,1).$$
The first few $P_n(p,q)$ are listed as follows:
\begin{align*}
P_1(p,q)&=1,~
P_2(p,q)=1+q,~
P_3(p,q)=1+q+4p,~
P_4(p,q)=1+14q+q^2+4p(1+q),\\
P_5(p,q)&=1+14q+q^2+44p(1+q)+16p^2,\\
P_6(p,q)&=1+135q+135q^2+q^3+p(44+328q+44q^2)+16p^2(1+q).
\end{align*}

The following two lemmas is fundamental.
\begin{lemma}[{\cite[Corollary~1]{Dumont79}}]\label{Dumont}
For $n\geq 0$, we have
$$J_{2n}(x)=P_{2n}(x,0)=P_{2n-1}(x,0),$$
$$J_{2n+1}(x)=P_{2n+1}(0,x)=P_{2n}(0,x).$$
\end{lemma}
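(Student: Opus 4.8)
The plan is to exploit the fact that $\operatorname{D}_J$ is precisely the total-derivative operator $\frac{d}{du}$ attached to the autonomous system $x'=yz,\ y'=xz,\ z'=xy$: for any analytic solution $(x(u),y(u),z(u))$ and any polynomial $F$, the chain rule gives $\frac{d}{du}F(x(u),y(u),z(u))=(\operatorname{D}_JF)(x(u),y(u),z(u))$, so by induction $F^{(n)}(0)=(\operatorname{D}_J^nF)(x(0),y(0),z(0))$. Taking $F=x$ reduces each Taylor coefficient of a solution to one evaluation of $\operatorname{D}_J^n(x)$. The first step is thus to manufacture a solution of the $\operatorname{D}_J$-system from the Jacobian functions. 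Matching \eqref{Abel-diff} against $x'=yz,\ y'=xz,\ z'=xy$ forces a rescaling of the argument and of the dependent variables; writing $\iota$ for a square root of $-1$ and solving the resulting constraints yields $x(u)=\sn(-u/k,k)$, $y(u)=\iota\cn(-u/k,k)$, $z(u)=\tfrac{\iota}{k}\dn(-u/k,k)$, which one checks directly solves the system, with initial data $(x(0),y(0),z(0))=(0,\iota,\iota/k)$. The imaginary unit is unavoidable, owing to the sign discrepancy between \eqref{Abel-diff} and the all-positive operator $\operatorname{D}_J$, but it is harmless because the final statement is an identity of polynomials verified at infinitely many real moduli $k\in(0,1)$.

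With this solution I would treat the odd case first. By \eqref{recurrence001} we have $\operatorname{D}_J^{2n+1}(x)=yz\,S_{2n+1}(x^2,y^2,z^2)$, so evaluating at $(0,\iota,\iota/k)$ gives $-\tfrac1k\,S_{2n+1}(0,-1,-1/k^2)$. Since $S_{2n+1}$ is homogeneous of degree $n$ in $(p,q,r)$, I can extract the common factor and rewrite this as $(-1)^{n+1}k^{-(2n+1)}S_{2n+1}(0,k^2,1)=(-1)^{n+1}k^{-(2n+1)}P_{2n+1}(0,k^2)$, using $P_n(p,q)=S_n(p,q,1)$. On the other hand, reading $x^{(2n+1)}(0)$ off the Taylor expansion of $\sn(-u/k,k)$ gives exactly $(-1)^{n+1}k^{-(2n+1)}J_{2n+1}(k^2)$. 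Equating the two, the signs and powers of $k$ cancel, leaving $P_{2n+1}(0,k^2)=J_{2n+1}(k^2)$ for all $k$, hence $J_{2n+1}(x)=P_{2n+1}(0,x)$. The even case is parallel: since $\operatorname{D}_J$ is symmetric in $x,y,z$, the transposition $x\leftrightarrow y$ applied to \eqref{recurrence001} gives $\operatorname{D}_J^{2n}(y)=y\,S_{2n}(y^2,x^2,z^2)$, and the same evaluation at $(0,\iota,\iota/k)$, the same homogeneity bookkeeping, and the Taylor expansion of $\cn(-u/k,k)$ yield $J_{2n}(x)=P_{2n}(x,0)$.

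It remains to prove the two index-shift identities $P_{2n}(x,0)=P_{2n-1}(x,0)$ and $P_{2n+1}(0,x)=P_{2n}(0,x)$, and here I would pass to Dumont's combinatorial model \eqref{Dumont-com}. The key observation is that the largest letter of a permutation is a cycle peak unless it is a fixed point: if the largest letter $N$ satisfies $\pi(N)\neq N$ then $\pi^{-1}(N)<N>\pi(N)$ holds automatically, whereas if $\pi(N)=N$ it is not a cycle peak. Now $P_{2n}(x,0)=\sum x^{\cpko(\pi)}$, where $\pi$ ranges over $\mathfrak{S}_{2n}$ with $\cpke(\pi)=0$; since the largest letter $2n$ is even, the constraint $\cpke(\pi)=0$ forces $2n$ to be a fixed point, and deleting it is a bijection onto $\{\pi\in\mathfrak{S}_{2n-1}:\cpke(\pi)=0\}$ that preserves $\cpko$. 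This gives $P_{2n}(x,0)=P_{2n-1}(x,0)$, and the identical argument applied to the odd largest letter $2n+1$ under the constraint $\cpko(\pi)=0$ gives $P_{2n+1}(0,x)=P_{2n}(0,x)$.

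The main obstacle is the first step: pinning down the correct complex rescaling that converts \eqref{Abel-diff} into the $\operatorname{D}_J$-system, and then carrying the homogeneity bookkeeping so that the evaluation at $(0,\iota,\iota/k)$ lands precisely on $P_{2n+1}(0,x)$ or $P_{2n}(x,0)$ with $x=k^2$, with all signs and powers of $k$ cancelling. One must also justify the passage from an identity valid for $k\in(0,1)$ to an identity of polynomials, which is immediate since both sides are polynomials agreeing at infinitely many points. Once the evaluation point and the rescaling are fixed, the rest is routine differentiation together with the short bijective argument above.
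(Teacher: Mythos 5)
Your proof is correct, but note that the paper itself offers no proof of this lemma at all: it is quoted verbatim from Dumont (Corollaire~1 of \cite{Dumont79}), so there is no internal argument to compare against. What you have done is reconstruct, essentially, the original Schett--Dumont derivation. I checked the details: the triple $\bigl(\sn(-u/k,k),\,\iota\cn(-u/k,k),\,\tfrac{\iota}{k}\dn(-u/k,k)\bigr)$ does solve $x'=yz$, $y'=xz$, $z'=xy$ with initial data $(0,\iota,\iota/k)$; the chain-rule identity $F^{(n)}(0)=(\operatorname{D}_J^nF)(x(0),y(0),z(0))$ is sound; and in both parities the homogeneity of $S_n$ (degree $\lrf{n/2}$ --- be aware that the paper's display~\eqref{recurrence0001} is typeset sloppily, the exponent of $r$ should be $\lrf{n/2}-i-j$, which is what you implicitly and correctly use) makes the factors $(-1)^{n}$ and $k^{-2n}$ cancel exactly against those coming from the Taylor expansions of $\sn(-u/k,k)$ and $\cn(-u/k,k)$, yielding $J_{2n+1}(k^2)=P_{2n+1}(0,k^2)$ and $J_{2n}(k^2)=P_{2n}(k^2,0)$ for all $k\in(0,1)$, hence as polynomials. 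Your index-shift argument is also correct and pleasantly clean: under $\cpke(\pi)=0$ the even maximal letter $2n$ must be a fixed point (any non-fixed maximal letter is automatically a cycle peak), and deleting it is a $\cpko$-preserving bijection; dually for $\cpko(\pi)=0$. Two remarks on dependencies: your argument leans on Proposition~\ref{prop01} (Dumont's interpretation of $s_{n,i,j}$, equation~\eqref{Dumont-com}) to identify $P_n(p,q)=S_n(p,q,1)$, which is legitimate and non-circular, since that is a separate theorem of \cite{Dumont79} from the corollary being proved; and your appeal to ``agreement at infinitely many points implies polynomial identity'' is the right way to dispose of the imaginary rescaling. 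In short: correct, complete in outline, and faithful to the historical proof that the paper delegates to the literature.
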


\begin{lemma}[{\cite[Eq.~(20)]{Viennot80}}]\label{Viennot}
Set $J_0(x)=1$. Then
for $n\geq 1$, we have
$$J_{2n}(x)=\sum_{i=0}^{n-1} \binom{2n-1}{2i}J_{2n-1-2i}(x)x^iJ_{2i}\left({1}/{x}\right),$$
$$J_{2n+1}(x)=\sum_{i=0}^{n} \binom{2n}{2i}J_{2n-2i}(x)x^iJ_{2i}\left({1}/{x}\right).$$
\end{lemma}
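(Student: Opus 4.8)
The plan is to derive both convolution identities directly from the Abel differential system \eqref{Abel-diff} by substituting the given Taylor expansions and comparing coefficients of like powers of $u$. Writing $x=k^2$, the expansions of $\sn$ and $\cn$ read $\sum_{a\geq 0}(-1)^aJ_{2a+1}(x)\tfrac{u^{2a+1}}{(2a+1)!}$ and $\sum_{a\geq 0}(-1)^aJ_{2a}(x)\tfrac{u^{2a}}{(2a)!}$ (with $J_0=1$), while the expansion of $\dn$ becomes
\begin{equation*}
\dn(u,k)=\sum_{b\geq 0}(-1)^bx^bJ_{2b}(1/x)\frac{u^{2b}}{(2b)!},
\end{equation*}
the $b=0$ term recovering the constant $1$ since $J_0=1$.

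For the first identity I would start from $\frac{d}{du}\cn(u,k)=-\sn(u,k)\dn(u,k)$. Differentiating the $\cn$-series term by term shows that the coefficient of $\tfrac{u^{2n-1}}{(2n-1)!}$ on the left is $(-1)^nJ_{2n}(x)$. On the right I would form the Cauchy product of the $\sn$- and $\dn$-series; the pair of terms indexed by $a$ and $b$ contributes to $u^{2a+2b+1}$, and the elementary identity $\frac{1}{(2a+1)!\,(2b)!}=\frac{1}{(2a+2b+1)!}\binom{2a+2b+1}{2b}$ turns the two factorials into a single binomial coefficient. Collecting the terms with $a+b+1=n$ and noting that $(-1)^a(-1)^b=(-1)^{n-1}$ combines with the leading minus sign to reproduce $(-1)^n$, one is left with
\begin{equation*}
J_{2n}(x)=\sum_{b=0}^{n-1}\binom{2n-1}{2b}J_{2n-1-2b}(x)\,x^bJ_{2b}(1/x),
\end{equation*}
which is the first formula after renaming $b$ as $i$.

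The second identity would follow in exactly the same way from $\frac{d}{du}\sn(u,k)=\cn(u,k)\dn(u,k)$. Differentiating the $\sn$-series gives the coefficient $(-1)^nJ_{2n+1}(x)$ of $\tfrac{u^{2n}}{(2n)!}$, while the Cauchy product of the two even series $\cn$ and $\dn$ contributes to $u^{2a+2b}$ with weight $\frac{1}{(2a)!\,(2b)!}=\frac{1}{(2a+2b)!}\binom{2a+2b}{2b}$; setting $a+b=n$ and tracking the signs $(-1)^a(-1)^b=(-1)^n$ produces the binomial $\binom{2n}{2i}$ together with the claimed expansion of $J_{2n+1}(x)$.

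The only genuinely delicate point is the bookkeeping around $\dn$: its Taylor coefficients are the \emph{reversed} polynomials $x^bJ_{2b}(1/x)$ rather than $J_{2b}(x)$, and it is precisely this reversal that injects the factor $x^iJ_{2i}(1/x)$ into the convolution. I would therefore carry the substitution $x=k^2$ and the factor $x^b$ consistently alongside the alternating signs; once these are aligned, the remainder is a routine comparison of the coefficients of two convergent exponential generating functions, valid on a common disc of convergence.
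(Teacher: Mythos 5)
Your proof is correct: the coefficient extraction, the factorial-to-binomial identity, and the sign bookkeeping (including the crucial reversal $x^bJ_{2b}(1/x)$ coming from the $\dn$-series) all check out. The paper itself gives no proof of Lemma~\ref{Viennot}---it is quoted from Viennot's paper---but your derivation from the differential system \eqref{Abel-diff} is exactly the route the paper attributes to Viennot, so this is essentially the same approach as the original source.
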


\section{Proof of the main results}\label{Section03:mainthm}
Let $\msn^{(i)}=\{\pi\in\msn\mid~\cpko(\pi)=i\}$. To prove Theorem~\ref{thm01},
we need the following lemma.
\begin{lemma}\label{thm2}
For $n\geq 1$, there are nonnegative integers $\gamma_{n,i,j}$ such that
\begin{equation}\label{gammapeak}
P_n(p,q)=\sum_{i=0}^{\lrf{(n-1)/2}}\sum_{j=0}^{\lrf{(n-2i)/4}}\gamma_{n,i,j}p^iq^j(1+q)^{\lrf{n/2}-i-2j}.
\end{equation}
Therefore, the polynomial $\sum_{\pi\in\msn^{(i)}}q^{\cpke(\pi)}$ is $\gamma$-positive for any $i\geq 0$.
\end{lemma}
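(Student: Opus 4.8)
The plan is to bypass $P_n(p,q)$ itself and instead prove a cleaner homogeneous statement about the polynomials $S_n(p,q,r)$ of \eqref{recurrence001}, namely that $S_n$ is a nonnegative-integer combination of the monomials $p^i(qr)^j(q+r)^k$. Since $P_n(p,q)=S_n(p,q,1)$ and the specialization $r=1$ sends $qr\mapsto q$ and $q+r\mapsto 1+q$, such a statement becomes exactly \eqref{gammapeak} with $\gamma_{n,i,j}\ge 0$; homogeneity $\deg S_n=\lrf{n/2}$ forces the exponent of $q+r$ to be $\lrf{n/2}-i-2j$. The final ``therefore'' is then automatic: the coefficient of $p^i$ in $P_n(p,q)$ is precisely $\sum_{\pi\in\msn^{(i)}}q^{\cpke(\pi)}$, and \eqref{gammapeak} displays it as $\sum_j\gamma_{n,i,j}q^j(1+q)^{\lrf{n/2}-i-2j}$, a $\gamma$-positive (in particular symmetric) polynomial.

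The first step is to exploit a symmetry: the operator $\operatorname{D}_J$ in \eqref{diff-elliptic} is invariant under interchanging $y$ and $z$ and fixes $x$, so every $\operatorname{D}_J^n(x)$, and hence $S_n(p,q,r)$, is symmetric in $q\leftrightarrow r$. I would therefore pass to the elementary symmetric functions $e_1=q+r$ and $e_2=qr$, write $S_n=\Phi_n(p,e_1,e_2)$, and conduct the entire argument in the ring of polynomials in $p,e_1,e_2$.

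Next I would read off two one-step recurrences directly from \eqref{diff-elliptic}. Writing $\mu=xyz$, one has $\operatorname{D}_J(x^2)=\operatorname{D}_J(y^2)=\operatorname{D}_J(z^2)=2\mu$, $\operatorname{D}_J(yz)=x(q+r)$, $x\mu=p\,(yz)$ and $(yz)\mu=x\,e_2$. Differentiating $\operatorname{D}_J^{2n}(x)=xS_{2n}$ and $\operatorname{D}_J^{2n+1}(x)=yz\,S_{2n+1}$, and using $\operatorname{D}_J(S_m)=2\mu\,\partial_{\Sigma}S_m$ with $\partial_{\Sigma}=\partial_p+\partial_q+\partial_r$, yields
\begin{equation*}
S_{2n+1}=(1+2p\,\partial_{\Sigma})S_{2n},\qquad S_{2n+2}=(e_1+2e_2\,\partial_{\Sigma})S_{2n+1}.
\end{equation*}
Finally, on symmetric functions the chain rule gives $\partial_q+\partial_r=2\partial_{e_1}+e_1\partial_{e_2}$, so $\partial_{\Sigma}=\partial_p+2\partial_{e_1}+e_1\partial_{e_2}$, and the two recurrence operators become
\begin{equation*}
1+2p\,\partial_p+4p\,\partial_{e_1}+2pe_1\,\partial_{e_2}\quad\text{and}\quad e_1+2e_2\,\partial_p+4e_2\,\partial_{e_1}+2e_1e_2\,\partial_{e_2}.
\end{equation*}

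Both operators have nonnegative integer coefficients and send each monomial $p^ie_2^je_1^k$ to a nonnegative combination of such monomials, so a straightforward two-step induction from $S_0=1$ proves $\Phi_n\in\mathbb{N}[p,e_1,e_2]$, which is what we want. In this formulation the positivity is automatic; the real work is front-loaded into the two preparatory moves, and the one genuinely delicate point is the change of variables. A naive substitution such as $r=e_1-q$ reintroduces spurious negative terms (it turns $qr$ into $e_1q-q^2$), and it is precisely the passage through the $q\leftrightarrow r$-symmetric functions $e_1,e_2$ --- legitimate because $S_n$ is symmetric in $q,r$ --- that cancels these and renders the recurrence manifestly nonnegative.
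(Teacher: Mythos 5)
Your proof is correct, and at its core it runs on the same positivity mechanism as the paper's: the paper's change of variables $a=y^2$, $b=z^2$, $c=yz$ and its operator $\operatorname{D}_G$ of \eqref{diff-elliptic02} amount to working with exactly your monomials, since $a+b=e_1$ and $c^2=e_2$, and expanding your two operators coefficientwise reproduces verbatim the paper's recurrence \eqref{recurrence02} for the $\gamma_{n,i,j}$. What genuinely differs is the derivation. You stay inside the invariant subring $\mathbb{Q}[p,e_1,e_2]$ and get the one-step recurrences $S_{2n+1}=(1+2p\,\partial_{\Sigma})S_{2n}$ and $S_{2n+2}=(e_1+2e_2\,\partial_{\Sigma})S_{2n+1}$ by a direct chain-rule computation with $\mu=xyz$; positivity of the rewritten operators is then visible at a glance, and homogeneity of $S_n$ handles the exponent bookkeeping, so no inductive ``shape'' hypothesis like \eqref{recurrence002} ever needs to be posited. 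The paper instead introduces $c$ as a fourth independent variable, asserts the form \eqref{recurrence002} by induction, and extracts the coefficient recurrences by expanding $\operatorname{D}_G^{2n+1}(x)$ and $\operatorname{D}_G^{2n+2}(x)$; this is heavier for the present lemma, but keeping $a$ and $b$ separate (rather than collapsing them to $e_1$) is precisely what the paper exploits in Section~\ref{Section05}, where the grammars $G_1$ and $G_2$ turn the same computation into a combinatorial interpretation of $\gamma_{n,i,j}$ on increasing trees --- information your invariant-ring formulation discards. One small point you should add: the stated range $i\le\lrf{(n-1)/2}$ in \eqref{gammapeak} says, for $n=2m$, that no pure power $p^m$ occurs, and homogeneity alone only gives $i+2j\le\lrf{n/2}$; in your setup this is immediate because every term of $e_1+2e_2\,\partial_{\Sigma}$ carries a factor $e_1$ or $e_2$, but it needs to be said for the identity to hold with the summation limits as written.
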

\begin{proof}
For the derivative operator $\operatorname{D}_J$ given by~\eqref{diff-elliptic}, we now consider a change of variables.
Set
\begin{equation}\label{abcxyz}
a=y^2,b=z^2,c=yz.
\end{equation}
Then we have
$\operatorname{D}_J(x)=c,\operatorname{D}_J(a)=2xc,\operatorname{D}_J(b)=2xc,\operatorname{D}_J(c)=x(a+b)$.
Let $\operatorname{D}_G$ be the derivative operator, acting on commuting variables $\{x,a,b,c\}$, that is given by
\begin{equation}\label{diff-elliptic02}
\operatorname{D}_G=c\frac{\partial}{\partial x}+2xc\frac{\partial}{\partial a}+2xc\frac{\partial}{\partial b}+x(a+b)\frac{\partial}{\partial c}.
\end{equation}
Note that $D_{G}(x)=c,~D_{G}^2(x)=D_{G}(c)=x(a+b),~D_{G}^3(x)=c(a+b+4x^2)$.
Then by induction, it is a routine check to verify that there exist nonnegative integers $\gamma_{n,i,j}$ such that
\begin{equation}\label{recurrence002}
\begin{gathered}
D_{G}^{2n}(x)=x\sum_{i,j\geq 0}\gamma_{2n,i,j}x^{2i}c^{2j}(a+b)^{n-i-2j},\\
D_{G}^{2n+1}(x)=c\sum_{i,j\geq 0}\gamma_{2n+1,i,j}x^{2i}c^{2j}(a+b)^{n-i-2j}.
\end{gathered}
\end{equation}
It follows that
\begin{align*}
D_{G}^{2n+1}(x)&=D_{G}\left(\sum_{i,j\geq 0}\gamma_{2n,i,j}x^{2i+1}c^{2j}(a+b)^{n-i-2j}\right)\\
&=c\sum_{i,j\geq 0}\gamma_{2n,i,j}\left((2i+1)x^{2i}c^{2j}(a+b)^{n-i-2j}+2jx^{2i+2}c^{2j-2}(a+b)^{n-i-2j+1}\right)+\\
&c\sum_{i,j\geq 0}\gamma_{2n,i,j}4(n-i-2j)x^{2i+2}c^{2j}(a+b)^{n-i-2j-1},
\end{align*}
\begin{align*}
D_{G}^{2n+2}(x)&=D_{G}\left(\sum_{i,j\geq 0}\gamma_{2n+1,i,j}x^{2i}c^{2j+1}(a+b)^{n-i-2j}\right)\\
&=x\sum_{i,j\geq 0}\gamma_{2n+1,i,j}\left(2ix^{2i-2}c^{2j+2}(a+b)^{n-i-2j}+(2j+1)x^{2i}c^{2j}(a+b)^{n-i-2j+1}\right)+\\
&x\sum_{i,j\geq 0}\gamma_{2n+1,i,j}4(n-i-2j)x^{2i}c^{2j+2}(a+b)^{n-i-2j-1}.
\end{align*}
Therefore, the numbers $\gamma_{n,i,j}$ satisfy the system of recurrences
\begin{equation}\label{recurrence02}
\begin{gathered}
\gamma_{2n,i,j}=2(i+1)\gamma_{2n-1,i+1,j-1}+(2j+1)\gamma_{2n-1,i,j}+4(n-i-2j+1)\gamma_{2n-1,i,j-1},\\
\gamma_{2n+1,i,j}=(2i+1)\gamma_{2n,i,j}+2(j+1)\gamma_{2n,i-1,j+1}+4(n-i-2j+1)\gamma_{2n,i-1,j},
\end{gathered}
\end{equation}
with the initial conditions $\gamma_{1,0,0}=\gamma_{2,0,0}=1$.
Set $x^2=p,~y^2=a=q$ and $z=1$. Then $z^2=b=1$
and $c=yz=y, c^2=ab=q$. Substituting to~\eqref{recurrence01} and~\eqref{recurrence002}, we get the desired formula~\eqref{gammapeak} by comparison.
This completes the proof.
\end{proof}

We now define the numbers $t_{n,i,j}$ by
\begin{equation*}\label{tnij}
t_{n,i,j}=\frac{\gamma_{n,i,j}}{4^{i+j}}.
\end{equation*}
From~\eqref{recurrence02}, we see that the numbers $t_{n,i,j}$ satisfy the system of recurrences
\begin{equation}\label{recurrence03}
\begin{gathered}
t_{2n,i,j}=2(i+1)t_{2n-1,i+1,j-1}+(2j+1)t_{2n-1,i,j}+(n-i-2j+1)t_{2n-1,i,j-1},\\
t_{2n+1,i,j}=(2i+1)t_{2n,i,j}+2(j+1)t_{2n,i-1,j+1}+(n-i-2j+1)t_{2n,i-1,j},
\end{gathered}
\end{equation}
with initial conditions $t_{1,0,0}=t_{2,0,0}=1$ and $t_{1,i,j}=t_{2,i,j}=0$ for $(i,j)\neq (0,0)$.
Let $$t_{n}(x,y)=\sum_{i=0}^{\lrf{(n-1)/2}}\sum_{j=0}^{\lrf{(n-2i)/4}}t_{n,i,j}x^iy^j.$$
It is easy to verify that the polynomials $t_{n}(x,y)$ satisfy the system of recurrences
\begin{equation}\label{recurrence04}
\begin{gathered}
t_{2n}(x,y)=(1+(n-1)y)t_{2n-1}(x,y)+(2-x)y\frac{\partial}{\partial x}t_{2n-1}(x,y)+2y(1-y)\frac{\partial}{\partial y}t_{2n-1}(x,y),\\
t_{2n+1}(x,y)=(1+nx)t_{2n}(x,y)+(2-x)x\frac{\partial}{\partial x}t_{2n}(x,y)+2x(1-y)\frac{\partial}{\partial y}t_{2n}(x,y).
\end{gathered}
\end{equation}
The first few polynomials $t_n(x,y)$ are given as follows:
\begin{align*}
t_1(x,y)&=t_2(x,y)=1,~
t_3(x,y)=1+x,~
t_4(x,y)=1+x+3y,\\
t_5(x,y)&=1+11x+x^2+3y,~
t_6(x,y)=1+11x+x^2+33y+15xy,\\
t_7(x,y)&=1+102x+57x^2+x^3+33y+78xy.
\end{align*}

\noindent{\bf A proof
Theorem~\ref{thm01}:}
\begin{proof}
Combining Lemma~\ref{Dumont} and Lemma~\ref{thm2}, we get
\begin{equation}\label{J2n1gamma}
J_{2n+1}(x)=\sum_{\pi\in\msn^{(0)}}x^{\cpke(\pi)}=\sum_{j=0}^{\lrf{n/2}}\gamma_{2n+1,0,j}x^{j}(1+x)^{n-2j}.
\end{equation}
Thus $J_{2n+1}(x)$ is $\gamma$-positive. This completes the proof.
\end{proof}

Note that $$J_{2n}(x)=\sum_{i=0}^{n-1}s_{2n,i,0}x^i=\sum_{i=0}^{n-1}\gamma_{2n,i,0}x^i=\sum_{i=0}^{n-1}4^it_{2n,i,0}x^i.$$
In the rest of this section, we show the bi-$\gamma$-positivity of $J_{2n}(x)$.

Assume that
$$f(x)=\sum_{i=0}^{\lrf{{m}/{2}}}\gamma_ix^i(1+x)^{m-2i},~g(x)=\sum_{j=0}^{\lrf{{n}/{2}}}\mu_jx^j(1+x)^{n-2j}.$$
Note that
$$f(x)g(x)=\sum_{i,j}\gamma_i\mu_jx^{i+j}(1+x)^{m+n-2(i+j)}.$$
Thus if $f_1(x)$ and $f_2(x)$ are both $\gamma$-positive, then $f_1(x)f_2(x)$ is $\gamma$-positive.
It is clear that
if $f(x)$ is $\gamma$-positive and $g(x)$ is bi-$\gamma$-positive, then $f(x)g(x)$ is bi-$\gamma$-positive.

\noindent{\bf A proof
Theorem~\ref{thm02}:}
\begin{proof}
We do this by induction on $n$.
Note that
$J_0(x)=J_2(x)=1,J_4(x)=1+4x=(1+x)+3x$ and $J_6(x)=1+44x+16x^2=(1+29x+x^2)+x(15+15x)$.
Hence the result holds for $n\leq 3$. So we proceed to the inductive step.
Assume that $J_{2n}(x)$ are bi-$\gamma$-positive for all $0\leq n\leq m$, where $m\geq 3$.
It follow from Lemma~\ref{Viennot} that
\begin{equation}\label{Jm03}
J_{2m+2}(x)=\sum_{i=0}^{m} \binom{2m+1}{2i}J_{2m+1-2i}(x)x^iJ_{2i}\left({1}/{x}\right).
\end{equation}
Recall that $\deg J_m(x)=\lrf{(m-1)/2}$.
Then for any $1\leq i\leq m$,
suppose that
$$J_{2i}(x)=\sum_{k=0}^{\lrf{(i-1)/2}} a_{2i,k}x^k(1+x)^{i-1-2k}+x\sum_{k=0}^{\lrf{i/2}-1} b_{2i,k}x^k(1+x)^{i-2-2k}.$$
Hence
\begin{equation*}\label{J2n01}
x^{i}J_{2i}(1/x)=x\sum_{k=0}^{\lrf{(i-1)/2}} a_{2i,k}x^k(1+x)^{i-1-2k}+\sum_{k=0}^{\lrf{i/2}-1} b_{2i,k}x^{k+1}(1+x)^{i-2(k+1)}.
\end{equation*}
From~\eqref{J2n1gamma}, we see that
\begin{equation*}\label{J2n02}
J_{2m+1-2i}(x)=\sum_{j=0}^{\lrf{(m-i)/2}}\gamma_{2m+1-2i,0,j}x^j(1+x)^{m-i-2j}
\end{equation*}
for $0\leq i\leq m$.
Therefore, we have
\begin{align*}
\sum_{i=1}^{m} \binom{2m+1}{2i}J_{2m+1-2i}(x)x^{i}J_{2i}(1/x)&=\sum_{\ell=1}^{\lrf{m/2}}c_{m,\ell}x^{\ell}(1+x)^{m-2\ell}+\\
&x\sum_{\ell=0}^{\lrf{(m-1)/2}}d_{m,\ell}x^{\ell}(1+x)^{m-1-2\ell},
\end{align*}
where $$c_{m,\ell}=\sum_{i=1}^{m} \binom{2m+1}{2i}\sum_{\substack{j+k+1=\ell\\j,k\geq 0}}\gamma_{2m+1-2i,0,j}b_{2i,k},$$
$$d_{m,\ell}=\sum_{i=1}^{m} \binom{2m+1}{2i}\sum_{\substack{j+k=\ell\\ j,k\geq 0}}\gamma_{2m+1-2i,0,j}a_{2i,k}.$$

By using~\eqref{Jm03}, we get
\begin{align*}
J_{2m+2}(x)&=J_{2m+1}(x)+\sum_{i=1}^{m} \binom{2m+1}{2i}J_{2m+1-2i}(x)x^{i}J_{2i}(1/x)\\
&=A_{2m+2}(x)+xB_{2m+2}(x),
\end{align*}
where
\begin{align*}
A_{2m+2}(x)&=\sum_{j=0}^{\lrf{m/2}}\gamma_{2m+1,0,j}x^j(1+x)^{m-2j}+\sum_{\ell=1}^{\lrf{m/2}}c_{m,\ell}x^{\ell}(1+x)^{m-2\ell},\\
&B_{2m+2}(x)=\sum_{\ell=0}^{\lrf{(m-1)/2}}d_{m,\ell}x^{\ell}(1+x)^{m-1-2\ell}.
\end{align*}
Clearly, $A_{2m+2}(x)$ and $B_{2m+2}(x)$ are both $\gamma$-positive.
Therefore, the polynomial $J_{2m+2}(x)$ is bi-$\gamma$-positive. This completes the proof.
\end{proof}

As an illustration, consider the following example.
\begin{ex}
Consider the polynomial $J_8(x)$.
Recall that
\begin{equation*}
J_{8}(x)=\sum_{i=0}^{3} \binom{7}{2i}J_{7-2i}(x)x^iJ_{2i}\left({1}/{x}\right).
\end{equation*}
It is easy to verify that $a_{2,0}=1,b_{2,0}=0,a_{4,0}=1,b_{4,0}=3,a_{6,0}=1,a_{6,1}=27,b_{6,0}=15$.
Note that $J_7(x)=(1+x)^3+132x(1+x)$.
Then
\begin{align*}
J_8(x)&=\left\{(1+x)^3+132x(1+x)+\binom{7}{4}3x(1+x)+\binom{7}{6}15x(1+x)\right\}+\\
&x\left\{\binom{7}{2}((1+x)^2+12x)+\binom{7}{4}(1+x)^2+\binom{7}{6}((1+x)^2+27x)\right\}\\
&=\left\{(1+x)^3+342x(1+x)\right\}+x\left\{63(1+x)^2+441x \right\}.
\end{align*}
\end{ex}

In the same way as in the proof of Theorem~\ref{thm02}, it is routine to check the following result.
\begin{prop}
Let $\{g_n(x)\}_{n\geq 0}$ be a sequence of $\gamma$-positive polynomials, and $\deg g_n(x)=n$. Let $(M_{i,j})_{i\geq 0,j\geq0}$ be an array of nonnegative real numbers.
For $n\geq 0$, we define $$f_{n+1}(x)=\sum_{i=0}^nM_{n,i}g_{n-i}(x)x^if_i(1/x),$$
where $f_0(x)=1$.
Then $f_n(x)$ is a bi-$\gamma$-positive polynomial for any $n\geq 0$. And so, $f_n(x)$ is unimodal,
\end{prop}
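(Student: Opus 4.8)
The plan is to reproduce the inductive argument used for Theorem~\ref{thm02} almost verbatim, replacing the odd-index polynomials $J_{2m+1-2i}(x)$ by the given $\gamma$-positive polynomials $g_{n-i}(x)$, the binomial weights $\binom{2m+1}{2i}$ by the nonnegative reals $M_{n,i}$, and the even-index polynomials $J_{2i}(x)$ by the recursively defined $f_i(x)$. First I would record the degree bookkeeping: since $\deg g_j=j$ and, by induction, $\deg f_i=i-1$ for $i\ge 1$, every summand $g_{n-i}(x)x^if_i(1/x)$ has degree at most $n$, while the $i=0$ term $M_{n,0}g_n(x)$ has degree $n$, so $\deg f_{n+1}=n$. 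The base cases $f_0(x)=1$ and $f_1(x)=M_{0,0}g_0(x)$ are constants, hence trivially bi-$\gamma$-positive.

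For the inductive step, assume $f_i$ is bi-$\gamma$-positive for all $0\le i\le n$. Using the symmetric decomposition, write $f_i(x)=a_i(x)+xb_i(x)$ with $a_i,b_i$ both $\gamma$-positive; since $\deg f_i=i-1$, I may expand $a_i(x)=\sum_k a_{i,k}x^k(1+x)^{i-1-2k}$ and $b_i(x)=\sum_k b_{i,k}x^k(1+x)^{i-2-2k}$ with all $a_{i,k},b_{i,k}\ge 0$. The key computation, identical to the one in the proof of Theorem~\ref{thm02}, is the reversal identity
\[
x^if_i(1/x)=x\sum_k a_{i,k}x^k(1+x)^{i-1-2k}+\sum_k b_{i,k}x^{k+1}(1+x)^{i-2-2k},
\]
obtained from the palindromicity of $a_i$ and $b_i$. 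Writing $g_{n-i}(x)=\sum_j g_{n-i,j}x^j(1+x)^{n-i-2j}$ with $g_{n-i,j}\ge 0$ and multiplying, the $a$-contributions collect into $x$ times a $\gamma$-positive polynomial palindromic of degree $n-1$, while the $b$-contributions, together with the $i=0$ term $M_{n,0}g_n(x)$, collect into a $\gamma$-positive polynomial palindromic of degree $n$. This is precisely the crossover already visible in the formulas for $c_{m,\ell}$ and $d_{m,\ell}$: the symmetric ($a$) part of $f_i$ feeds the $B$-block, and the $b$-part feeds the $A$-block.

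Consequently $f_{n+1}(x)=A_{n+1}(x)+xB_{n+1}(x)$ with $A_{n+1}$ and $B_{n+1}$ both $\gamma$-positive, of palindromic degrees $n$ and $n-1$ respectively. By the uniqueness of the symmetric decomposition (Proposition~\ref{prop01}), $(A_{n+1},B_{n+1})$ is the symmetric decomposition of $f_{n+1}$, so $f_{n+1}$ is bi-$\gamma$-positive; since bi-$\gamma$-positivity implies the alternatingly increasing property, which implies unimodality, the conclusion follows. Here I would invoke two elementary closure facts from the excerpt: a product of $\gamma$-positive polynomials is $\gamma$-positive, and a nonnegative combination of $\gamma$-positive polynomials sharing a common palindromic degree is $\gamma$-positive (its $\gamma$-coefficients are the corresponding nonnegative combinations).

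I expect the only genuine obstacle to be the degree and palindromicity bookkeeping rather than any conceptual difficulty: one must verify that the $a$- and $b$-contributions really land in the two separated palindromic blocks of degrees $n$ and $n-1$, so that the uniqueness in Proposition~\ref{prop01} applies, and one should note the caveat that if a leading weight such as $M_{n,0}$ vanishes then $\deg f_{n+1}$ may drop below $n$, in which case the symmetric decomposition must be read relative to the true degree. Everything else is the routine substitution anticipated by the phrase \emph{in the same way as in the proof of Theorem~\ref{thm02}}.
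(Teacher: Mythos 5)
Your argument is correct and is exactly the paper's intended proof: the paper offers no separate argument for this proposition beyond the remark that it follows in the same way as the proof of Theorem~\ref{thm02}, and your induction --- the reversal identity $x^{i}f_i(1/x)=xa_i(x)+xb_i(x)$ together with the crossover sending the $b$-part of $f_i$ into the symmetric block of degree $n$ and the $a$-part into $x$ times the block of degree $n-1$ --- is precisely that adaptation, with $M_{n,i}$, $g_{n-i}$, $f_i$ in place of $\binom{2m+1}{2i}$, $J_{2m+1-2i}$, $J_{2i}$.

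One correction to your final caveat, which as written points the wrong way: if weights vanish and $\deg f_i$ drops below $i-1$, the remedy is \emph{not} to read symmetric decompositions relative to the true degree, since the identity $x^{i}f_i(1/x)=xa_i(x)+xb_i(x)$ requires $a_i$ and $b_i$ to be palindromic relative to $i-1$ and $i-2$; with the true-degree decomposition one gets instead $x^{i}f_i(1/x)=x^{\,i-\deg f_i}\bigl(a_i(x)+b_i(x)\bigr)$, whose terms have centers of symmetry too large to fit either block, and the induction collapses. The correct repair is to carry \emph{nominal}-degree decompositions (relative to $i-1$) through the whole induction and to read the conclusion as bi-$\gamma$-positivity relative to degree $n-1$; indeed, relative to true degrees the statement itself can fail in degenerate cases, e.g.\ $g_j(x)=(1+x)^j$ and $M_{n,i}=\delta_{i,n}$ give $f_4(x)=x^2$, whose true-degree symmetric decomposition is $\bigl(-x,\,1+x\bigr)$.
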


\section{The combinatorial interpretations of $\gamma$-coefficients}\label{Section05}

In the past decades, the bijections between $\msn$ and increasing trees on $n+1$ vertices are repeatedly discovered (see~\cite[Section~1.5]{Stan11} for instance).
It is natural to explore a combinatorial interpretation of the numbers $s_{n,i,j}$ and $\gamma_{n,i,j}$ in terms of some statistics on increasing trees

Let $[n]_0=\{0,1,\ldots,n\}$.
We define an {\it increasing tree} as an unordered tree with vertices set $[n]_0$, rooted at $0$ and
the labels increase along each path from the root. Denote by $\mathcal{T}_n$ the set of
increasing trees with $n+1$ vertices.
Let $T\in\mathcal{T}_n$. For nodes $u$ and $v$ in $T$,
we say that $v$ is the {\it child} of $u$ or $u$ is the {\it predecessor} of $v$ if $u$ is the first node following
$v$ in the unique path from $v$ to the root $0$, and we write as $u=p_T(v)$.
For any vertex $u$, let $\Child_T(u)$ be the set of children of the vertex $u$, and let $\child_T(u)=\#\Child_T(u)$.
If $\child_T(u)=0$, we say that $u$ is a {\it leaf} of $T$.
Let $\leaff_T$ be the set of leaves of $T$.

A {\it partition} $\sigma$ of $[n]_0$ is a collection of nonempty
disjoint subsets $B_1,\ldots,B_t$, called {\it blocks}, whose union is $[n]_0$.
Then $\sigma$ is a {\it matching} if each block of $\sigma$ contains only one or two elements.
A {\it singleton} is a block with only one element. A {\it sub-matching} $\M$ of $[n]_0$ is
the union of all non-singletons in a matching of $[n]_0$, i.e., the cardinality of each block of $\M$ is exactly 2.
For any sub-matching $\M$ of $[n]_0$, the {\it standard form} of $\M$ is a list of blocks $\{(a_1,b_1),(a_2,b_2),\ldots,(a_k,b_k)\}$
such that $a_i<b_i$ for all $i=1,\ldots,k$ and $a_1<a_2<\cdots<a_k$. Let $\SSS(\M)=\{a_1,b_1,a_2,b_2,\ldots,a_k,b_k\}$.
In the following discussion, we always write $\M$ in the standard form.

Let $T\in\mathcal{T}_n$. We now define the following {\it tree-matching algorithm}.

\noindent{\bf Tree-matching algorithm:}
\begin{itemize}
\item Step 1. Let $(a_1,b_1)=(0,1)$.
\item Step 2. At time $k\geq 2$, suppose that $\{(a_1,b_1),(a_2,b_2),\ldots,(a_{k-1},b_{k-1
})\}$ are determined. Denote by $U_k$ the set of vertices $v$ such that $v\notin\{a_1,b_1,a_2,b_2,\ldots,a_{k-1},b_{k-1}\}$ and
 $\child_T(v)\neq\emptyset$.
Then we let $a_k=\min U_k$ and $b_k=\min \Child_T(a_k)$. Iterating Step $2$ until $U_{k+1}=\emptyset$ for some $k$, then
we get a sub-matching $\M=\{(a_1,b_1),(a_2,b_2),\ldots,(a_k,b_k)\}$ of $[n]_0$, and this sub-matching is named {\it tree-matching} and it is denoted by $\M_T$.
\end{itemize}
Let $\M_T=\{(a_1,b_1),(a_2,b_2),\ldots,(a_k,b_k)\}$ be a {\it tree-matching}, and we call the block $(a_i,b_i)$ a {\it tree-pair}.
If $\child_T(a_i)+\child_T(b_i)-1$ is even (resp.~odd), then we say that $(a_i,b_i)$ is an even (resp.~odd) tree-pair.
If $\child_T(a_i)+\child_T(b_i)-1=0$, then we say that $(a_i,b_i)$ is a {\it zero tree-pair}, which is also an even tree-pair.
If $\child_T(a_i)+\child_T(b_i)-1>0$ and $$v=\max \left(\Child_T(a_i)\cup \Child_T(b_i)\setminus\{b_i\}\right),$$ then
we say that $(a_i,b_i)$ is a {\it descent} (resp.~{\it ascent) tree-pair} if $a_i$ (resp.~$b_i$) is the predecessor of the vertex $v$.
For any $v\notin \SSS(\M_T)$, it is called a {\it tree-singleton}, and it is clear that $v\in \leaff_T$.

Let $T\in\mathcal{T}_n$ and let $\M_T$ be the tree-matching of $T$. Let $\st(T)$ denote the number of tree-singletons in $T$.
Let $\zero(T)$ (resp. $\des^e(T)$, $\des^o(T)$, $\asc^e(T)$, $\asc^o(T)$, $\evenp(T)$) denote the number of zero tree-pairs
(resp.~even descent tree-pairs, odd descent tree-pairs, even ascent tree-pairs,
odd ascent tree-pairs, even tree-pairs) in $\M_T$.
It is clear that
\begin{align*}
&\evenp(T)=\zero(T)+\des^e(T)+\asc^e(T),\\
&2(\evenp(T)+\des^o(T)+\asc^o(T))+\st(T)=n+1.
\end{align*}

\begin{ex}\label{ex03}
Consider the following increasing tree $T\in \mathcal{T}_9$:
\begin{center}\includegraphics[width=4cm]{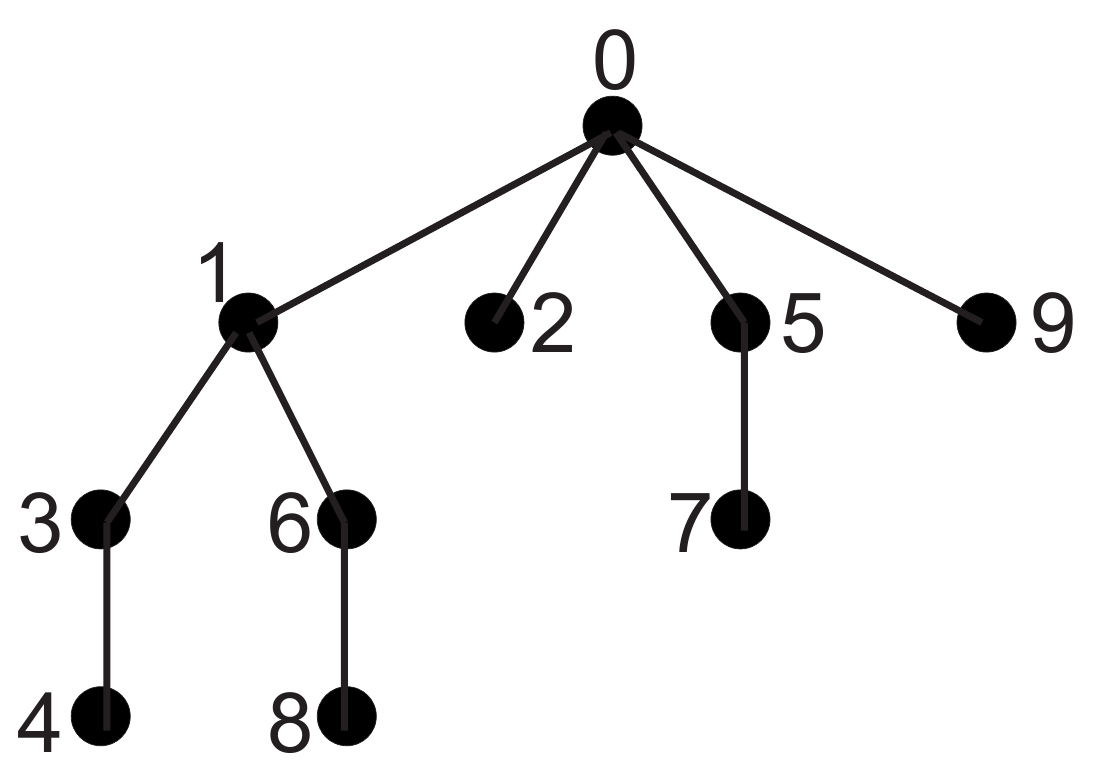}\end{center}
Using the tree-matching algorithm, we obtain $\M_T=\{(0,1),(3,4),(5,7),(6,8)\}$. There are four tree-pairs: $(0,1)$ is an odd descent tree-pair;
$(3,4)$,$(5,7)$ and $(6,8)$ are all zero tree-pairs. The vertices $2$ and $9$ are both tree-singletons.
\end{ex}

For an alphabet $A$, let $\mathbb{Q}[[A]]$ be the rational commutative ring of formal power
series in monomials formed from letters in $A$. A {\it Chen's grammar} (which is also known as context-free grammar) over
$A$ is a function $G: A\rightarrow \mathbb{Q}[[A]]$ that replaces a letter in $A$ by an element of $\mathbb{Q}[[A]]$.
The formal derivative $D_G$ is a linear operator defined with respect to a context-free grammar $G$. 
Following~\cite{Chen17}, a {\it grammatical labeling} is an assignment of the underlying elements of a combinatorial
structure with variables, which is consistent with the substitution rules of a grammar.
The reader is referred to~\cite{Chen17} for more details on this subject.

Note that the differential operator $D_J$ is equivalent to the {\it Schett-Dumont grammar}:
\begin{equation}\label{grammar01}
G=\{x\rightarrow yz,y\rightarrow xz, z\rightarrow xy\}.
\end{equation}
Consider the grammar
\begin{equation}\label{grammar02}
G_1=\{x\rightarrow c,a\rightarrow 2xc, b\rightarrow 2xc,c \rightarrow x(a+b)\}.
\end{equation}
Clearly, the grammar $G_1$ is equivalent to the differential operator $D_G$ given by~\eqref{diff-elliptic02}.
Consider the grammar
\begin{equation}\label{grammar0oe2}
G_2=\{x\rightarrow c,a\rightarrow x(g+h), b\rightarrow x(g+h),c \rightarrow x(a+b),g\rightarrow x(a+b),h \rightarrow x(a+b)\}.
\end{equation}
In particular,
$D_{G_2}^0(x)=x,~
D_{G_2}(x)=c,~
D_{G_2}^2(x)=x(a+b),~
D_{G_2}^3(x)=c(a+b)+2(g+h)x^2$.

\begin{lemma}\label{lemma5}
For the grammar $G_2$, we have
$$D_{G_2}^n(x)=\sum\limits_{T\in\mathcal{T}_n}x^{\st(T)}c^{\zero(T)}a^{\des^o(T)}b^{\asc^o(T)}g^{\des^e(T)}h^{\asc^e(T)}.$$
\end{lemma}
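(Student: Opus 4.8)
The plan is to prove the identity by a grammatical labeling of increasing trees together with induction on $n$, the inductive step being driven by insertion of the maximal vertex. First I would attach to each $T\in\mathcal{T}_n$ the weight
$$w(T)=x^{\st(T)}c^{\zero(T)}a^{\des^o(T)}b^{\asc^o(T)}g^{\des^e(T)}h^{\asc^e(T)},$$
read as a product of one label per tree-singleton and one label per tree-pair of $\M_T$, where the label of a pair is $c,a,b,g,h$ according as it is a zero, odd descent, odd ascent, even descent, or even ascent tree-pair. The statement to prove then reads $D_{G_2}^n(x)=\sum_{T\in\mathcal{T}_n}w(T)$. The base cases $n=0,1$ are immediate: $\mathcal{T}_0$ is the single root, a tree-singleton of weight $x=D_{G_2}^0(x)$, and $\mathcal{T}_1$ is the edge $(0,1)$, a zero tree-pair of weight $c=D_{G_2}(x)$.

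For the inductive step I would use the standard bijection $\mathcal{T}_{n+1}\leftrightarrow\{(T,u):T\in\mathcal{T}_n,\ u\in[n]_0\}$: since $n+1$ is the largest label it is always a leaf of any $T'\in\mathcal{T}_{n+1}$, so deleting it yields $T\in\mathcal{T}_n$ together with its former predecessor $u=p_{T'}(n+1)$, and conversely I write $T_u$ for the tree obtained from $T$ by attaching $n+1$ as a new child of $u$. Because $D_{G_2}$ acts on $w(T)$ by the Leibniz rule, replacing each label by its image under $G_2$ in \eqref{grammar0oe2}, it suffices to prove the local identity $D_{G_2}(w(T))=\sum_{u\in[n]_0}w(T_u)$ for fixed $T$; summing over $T$ and applying the inductive hypothesis then produces $D_{G_2}^{n+1}(x)$.

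The heart of the argument is a stability lemma for the greedy tree-matching, which I would establish first. Since the algorithm selects $a_1<a_2<\cdots$ in strictly increasing order with $b_k=\min\Child_T(a_k)>a_k$, and since attaching $n+1$ as a child of $u$ leaves $\Child_T(w)$ unchanged for every $w\neq u$ and leaves $\min\Child_T(u)$ unchanged whenever $u$ already has a child, the run on $T_u$ reproduces every tree-pair of $\M_T$ not involving $u$. Concretely: if $u$ is a tree-singleton of $T$ then in $T_u$ it acquires the single child $n+1$, is selected as some $a_k$, and is matched to $b_k=n+1$ as a zero tree-pair, all other pairs and singletons persisting; if $u$ already lies in a tree-pair $(a_i,b_i)$ then that pair is preserved, since its minimal child is untouched, while $n+1$ becomes a fresh tree-singleton and only the \emph{type} of $(a_i,b_i)$ changes because $\child_T(u)$ increases by one. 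Proving this locality carefully — in particular that a former singleton is never forced to become some $b_k$, and that the pairings among deeper descendants are untouched — is the main obstacle.

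Granting stability, the case analysis matches $G_2$ termwise. When $u$ is a singleton, $w(T_u)=(c/x)\,w(T)$, realizing $x\to c$. When $u$ lies in a tree-pair, inserting $n+1$ raises $\chi:=\child(a_i)+\child(b_i)-1$ by one, flipping its parity, and makes $n+1$ the maximum of $(\Child(a_i)\cup\Child(b_i))\setminus\{b_i\}$; hence the predecessor of that maximum is $u$, so the new pair is a descent pair when $u=a_i$ and an ascent pair when $u=b_i$, and the extra factor $x$ records the new singleton $n+1$. Thus a zero or even pair ($c,g,h$) becomes odd descent or ascent, giving $c,g,h\to x(a+b)$, while an odd pair ($a,b$) becomes even descent or ascent, giving $a,b\to x(g+h)$, which are exactly the rules of $G_2$. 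Since each vertex of $[n]_0$ falls into precisely one of these cases and contributes one term, summing over $u$ reproduces $D_{G_2}(w(T))$ and closes the induction.
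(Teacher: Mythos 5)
Your proposal is correct and follows essentially the same route as the paper's proof: a grammatical labeling of increasing trees (singletons labeled $x$, tree-pairs labeled $c,a,b,g,h$ by type) together with induction on insertion of the maximal vertex $n+1$, with the same six-case match against the substitution rules of $G_2$. The only difference is that you spell out carefully the stability of the greedy tree-matching under attaching $n+1$ (using that $n+1$ is the largest label, so it never disturbs the choices $a_k=\min U_k$ and $b_k=\min\Child_T(a_k)$), a point the paper dismisses as ``routine to check,'' so your write-up is if anything more complete.
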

\begin{proof}
We first present a grammatical labeling for $T\in\mathcal{T}_n$.
A tree-singleton $v$ of $T$ is labeled by $x$.
If $(a_i,b_i)$ is a zero tree-pair (resp. odd descent tree-pair, odd ascent tree-pair, even descent tree-pair, even ascent tree-pair)
of $\M_T$, then the edge $(a_i,b_i)$ is labeled by $c$ (resp.~$a,b,g,h$).

Let $T_{(n+1)}$ be a tree generated from $T$ by adding the vertex $n+1$. We distinguish six cases:
\begin{enumerate}
  \item [$(i)$] If we add the vertex $n+1$ as child of a tree-singleton $v$, then $(v,n+1)$ is a zero
tree-pair in $\M_{T_{(n+1)}}$. This corresponds to the substitution rule $x\rightarrow c$ in ${G_2}$.
  \item [$(ii)$] Let $(a_i,b_i)$ be an odd descent tree-pair of $\M_T$. If we add the vertex $n+1$ as child of $a_i$ (resp. $b_i$), then
the number $\child_T(a_i)+\child_T(b_i)-1$ becomes even, and the label of $(a_i,b_i)$ changes from $a$ to $g$ (resp. $h$). Moreover, the vertex $n+1$ is labeled by $x$.
This corresponds to the substitution rule $a\rightarrow x(g+h)$ in ${G_2}$.
  \item [$(iii)$] Let $(a_i,b_i)$ be an odd ascent tree-pair of $\M_T$. If we add the vertex $n+1$ as child of $a_i$ (resp. $b_i$), then
the number $\child_T(a_i)+\child_T(b_i)-1$ becomes even, and the label of $(a_i,b_i)$ changes from $b$ to $g$ (resp. $h$). Moreover, the vertex $n+1$ is labeled by $x$.
This corresponds to the substitution rule $b\rightarrow x(g+h)$ in ${G_2}$.
 \item [$(iv)$] Let $(a_i,b_i)$ be an even descent tree-pair of $\M_T$. If we add the vertex $n+1$ as child of $a_i$ (resp. $b_i$), then
the number $\child_T(a_i)+\child_T(b_i)-1$ becomes odd, and the label of $(a_i,b_i)$ changes from $g$ to $a$ (resp. $b$). Moreover, the vertex $n+1$ is labeled by $x$.
This corresponds to the substitution rule $g\rightarrow x(a+b)$ in ${G_2}$.
 \item [$(v)$] Let $(a_i,b_i)$ be an even ascent tree-pair of $\M_T$. If we add the vertex $n+1$ as child of $a_i$ (resp. $b_i$), then
the number $\child_T(a_i)+\child_T(b_i)-1$ becomes odd, and the label of $(a_i,b_i)$ changes from $h$ to $a$ (resp. $b$). Moreover, the vertex $n+1$ is labeled by $x$.
This corresponds to the substitution rule $h\rightarrow x(a+b)$ in ${G_2}$.
 \item [$(vi)$] Let $(a_i,b_i)$ be a zero tree-pair of $\M_T$. If we add the vertex $n+1$ as child of $a_i$ (resp. $b_i$), then
$\child_T(a_i)+\child_T(b_i)-1=1$, and the label of $(a_i,b_i)$ changes from $c$ to $a$ (resp. $b$). Moreover, the vertex $n+1$ is labeled by $x$.
This corresponds to the substitution rule $c\rightarrow x(a+b)$ in ${G_2}$.
\end{enumerate}

It is routine to check that the action of $D_{G_2}$ on increasing
trees in $\mathcal{T}_n$ generates all the increasing trees in $\mathcal{T}_{n+1}$.
By induction, we see that the above grammatical labeling leads to the desired result.
\end{proof}

Setting $g=h=c$ in Lemma~\ref{lemma5}, then the grammar~\eqref{grammar0oe2} reduces to~\eqref{grammar02}, which leads to
the following result.
\begin{lemma}\label{lemma02}
For the grammar $G_1$ defined by~\eqref{grammar02}, we have
$$D_{G_1}^n(x)=\sum\limits_{T\in\mathcal{T}_n}x^{\st(T)}c^{\evenp(T)}a^{\des^o(T)}b^{\asc^o(T)}.$$
\end{lemma}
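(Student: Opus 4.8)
The plan is to derive Lemma~\ref{lemma02} as a specialization of Lemma~\ref{lemma5} under the substitution $g=h=c$, exactly as signposted in the sentence preceding the statement. First I would record that this substitution carries the grammar $G_2$ of~\eqref{grammar0oe2} onto the grammar $G_1$ of~\eqref{grammar02}: the rules $a\to x(g+h)$ and $b\to x(g+h)$ become $a\to 2xc$ and $b\to 2xc$, the rules $x\to c$ and $c\to x(a+b)$ are untouched, and the two rules $g\to x(a+b)$, $h\to x(a+b)$ collapse onto the $c$-rule. So the substitution is consistent with every production rule.

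Next I would make precise the sense in which the specialization interacts with the formal derivative. Let $\phi$ be the ring homomorphism fixing $x,a,b,c$ and sending $g\mapsto c$, $h\mapsto c$. The key claim is that $\phi\circ D_{G_2}=D_{G_1}\circ\phi$. Since both sides are $\phi$-derivations (they satisfy the twisted Leibniz rule $\delta(uv)=\delta(u)\phi(v)+\phi(u)\delta(v)$ because $\phi$ is multiplicative and $D_{G_2},D_{G_1}$ are derivations), it suffices to check the identity on the generators $x,a,b,c,g,h$. The only checks that are not completely immediate are $\phi(D_{G_2}(a))=\phi(x(g+h))=2xc=D_{G_1}(\phi(a))$ and $\phi(D_{G_2}(g))=\phi(x(a+b))=x(a+b)=D_{G_1}(c)=D_{G_1}(\phi(g))$ (and the symmetric ones for $b,h$). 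Iterating the commutation relation gives $\phi(D_{G_2}^n(x))=D_{G_1}^n(\phi(x))=D_{G_1}^n(x)$ for all $n$.

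Finally I would apply $\phi$ to the monomial expansion furnished by Lemma~\ref{lemma5}. Because $\phi$ sends both $g$ and $h$ to $c$, this gives
$$D_{G_1}^n(x)=\sum_{T\in\mathcal{T}_n}x^{\st(T)}c^{\zero(T)+\des^e(T)+\asc^e(T)}a^{\des^o(T)}b^{\asc^o(T)}.$$
The identity $\evenp(T)=\zero(T)+\des^e(T)+\asc^e(T)$ recorded just before Example~\ref{ex03} then merges the three $c$-exponents into $\evenp(T)$, which is precisely the asserted formula.

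I do not anticipate a genuine obstacle here. The one place deserving care is the justification that $\phi$ intertwines the two grammar derivatives; this is the standard principle that a substitution respecting all production rules commutes with the associated formal derivative, and in the present case it reduces to the handful of one-line letter checks above.
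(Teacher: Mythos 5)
Your proposal is correct and follows exactly the paper's own route: the paper proves Lemma~\ref{lemma02} by setting $g=h=c$ in Lemma~\ref{lemma5}, observing that the grammar~\eqref{grammar0oe2} then reduces to~\eqref{grammar02} and that $\evenp(T)=\zero(T)+\des^e(T)+\asc^e(T)$ merges the exponents. Your explicit verification that the specialization homomorphism $\phi$ intertwines $D_{G_2}$ and $D_{G_1}$ simply makes rigorous the step the paper leaves implicit.
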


Combining~\eqref{recurrence01},~\eqref{abcxyz} and Lemma~\ref{lemma02}, we immediately obtain the following result.
\begin{theorem}
For $n\geq 1$, we have
\begin{align*}
s_{2n,i,j}&=\#\{T\in\mathcal{T}_{2n}\mid \st(T)=2i+1,~\evenp(T)+2\des^o(T)=2j\},\\
s_{2n+1,i,j}&=\#\{T\in\mathcal{T}_{2n+1}\mid \st(T)=2i,~\evenp(T)+2\des^o(T)=2j+1\}.
\end{align*}
\end{theorem}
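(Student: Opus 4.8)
The plan is to read the theorem off Lemma~\ref{lemma02} by undoing the change of variables~\eqref{abcxyz} and comparing the result against~\eqref{recurrence01}. The crucial point, already recorded in the proof of Lemma~\ref{thm2}, is that under the substitution $a=y^2$, $b=z^2$, $c=yz$ the action of $\operatorname{D}_J$ on $x,a,b,c$ reproduces the grammar $G_1$: indeed $\operatorname{D}_J(x)=c$, $\operatorname{D}_J(a)=2xc$, $\operatorname{D}_J(b)=2xc$ and $\operatorname{D}_J(c)=x(a+b)$ by the chain rule. Consequently $\operatorname{D}_J^n(x)=D_{G_1}^n(x)$ once the right-hand side is interpreted through $a=y^2$, $b=z^2$, $c=yz$, so no new computation is needed beyond this identification.

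First I would invoke Lemma~\ref{lemma02} and substitute $a=y^2$, $b=z^2$, $c=yz$ into its right-hand side, obtaining
\[
\operatorname{D}_J^n(x)=\sum_{T\in\mathcal{T}_n}x^{\st(T)}\,y^{\evenp(T)+2\des^o(T)}\,z^{\evenp(T)+2\asc^o(T)}.
\]
Thus each increasing tree $T\in\mathcal{T}_n$ contributes a single monomial whose $x$-, $y$- and $z$-exponents are $\st(T)$, $\evenp(T)+2\des^o(T)$ and $\evenp(T)+2\asc^o(T)$, respectively.

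Next I would match this against the two expansions in~\eqref{recurrence01}. For the even operator $\operatorname{D}_J^{2n}(x)$ the monomials there are $x^{2i+1}y^{2j}z^{2n-2i-2j}$, so a tree $T\in\mathcal{T}_{2n}$ contributes to $s_{2n,i,j}$ exactly when $\st(T)=2i+1$ and $\evenp(T)+2\des^o(T)=2j$; for $\operatorname{D}_J^{2n+1}(x)$ the analogous reading gives $\st(T)=2i$ and $\evenp(T)+2\des^o(T)=2j+1$. Grouping the trees by their associated monomial then yields the two counting formulas claimed in the theorem.

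The one thing demanding care—and the step I expect to be the main (if mild) obstacle—is confirming that matching only the $x$- and $y$-exponents already pins down the correct $z$-exponent, so that no independent third condition on the trees is needed. Here I would use the identity $2(\evenp(T)+\des^o(T)+\asc^o(T))+\st(T)=n+1$ stated just before Example~\ref{ex03}: solving for $2\asc^o(T)$ gives $\evenp(T)+2\asc^o(T)=(n+1)-\st(T)-(\evenp(T)+2\des^o(T))$, which equals $2n-2i-2j$ in the even case and $2n-2i-2j+1$ in the odd case, precisely the $z$-exponents appearing in~\eqref{recurrence01}. This same identity simultaneously guarantees that the parities are consistent, namely that $\st(T)$ is odd for $T\in\mathcal{T}_{2n}$ and even for $T\in\mathcal{T}_{2n+1}$, so that every tree monomial genuinely has the prescribed shape. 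With the exponent bookkeeping verified, the comparison of the two expansions completes the proof.
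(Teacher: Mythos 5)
Your proposal is correct and follows exactly the paper's route: the paper proves this theorem by ``combining \eqref{recurrence01}, \eqref{abcxyz} and Lemma~\ref{lemma02}'', which is precisely your substitution $a=y^2$, $b=z^2$, $c=yz$ into Lemma~\ref{lemma02} followed by coefficient comparison with \eqref{recurrence01}. Your careful verification that the $z$-exponent is forced by the identity $2(\evenp(T)+\des^o(T)+\asc^o(T))+\st(T)=n+1$ is a detail the paper leaves implicit in its one-line proof, and it is exactly the right justification.
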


In the rest of this section, we explore the combinatorial interpretation of $\gamma$-coefficients $\gamma_{n,i,j}$ given by~\eqref{gammapeak}.
For $\M=\{(a_1,b_1),(a_2,b_2),\ldots,(a_k,b_k)\}$,
let $$\mathbb{T}_{\M}=\{T\in \mathcal{T}_n\mid \text{$\M$ is the tree-matching of $T$}\}.$$
For $1\leq i\leq k$, we define a function $\varphi_{\M;(a_i,b_i)}$ on $\mathbb{T}_{\M}$ as follows:
\begin{itemize}\item If $\Child_T(a_i)\cup \Child_T(b_i)\setminus\{b_i\}=\emptyset$, then let $\varphi_{\M;(a_i,b_i)}(T)=T$.
\item Otherwise, let
$v=\max \left(\Child_T(a_i)\cup \Child_T(b_i)\setminus\{b_i\}\right)$.
If $a_i=p_T(v)$, then let $\varphi_{\M;(a_i,b_i)}(T)$ be an increasing tree in $\mathcal{T}_n$ obtained from $T$ by deleting the edge $(a_i,
    v)$ and adding the edge $(b_i,v
    )$; If $b_i=p_T(v)$, then let $\varphi_{\M;(a_i,b_i)}(T)$ be an increasing tree in $\mathcal{T}_n$ obtained from $T$ by deleting the edge $(b_i,
    v)$ and adding the edge $(a_i,v)$.
\end{itemize}
Since the tree-matching of $\varphi_{\M;(a_i,b_i)}(T)$ is still $\M$, we have $\varphi_{\M;(a_i,b_i)}(T)\in\mathbb{T}_{\M}$. Moreover, it is clear that the functions $\varphi_{\M;(a_i,b_i)}$ are all involutions and that they commute. Hence,
for any subset $S \subseteq[k]$, we define the function $\varphi_{\M,S}$ by
$$\varphi_{\M,S}(T)=\prod\limits_{i\in S}\varphi_{\M;(a_i,b_i)}(T).$$
\begin{ex}
Let $\M=\{(0,1),(3,4),(5,7),(6,8)\}$ be a sub-matching of $[9]_0$. Then $\M$ is
the tree-matching of the increasing tree $T$ given in Example~\ref{ex03}. Hence
$T\in\mathbb{T}_{\M}$. Since $$\Child_T(3)\cup \Child_T(4)\setminus\{4\}=\emptyset,$$
we have $\varphi_{\M;(3,4)}(T)=T$. Note that $\max\Child_T(0)\cup \Child_T(1)\setminus\{1\}=\max\{2,3,5,6,9\}=9$. Thus $\varphi_{\M;(0,1)}(T)$ is given as follows:
\begin{center}\includegraphics[width=4cm]{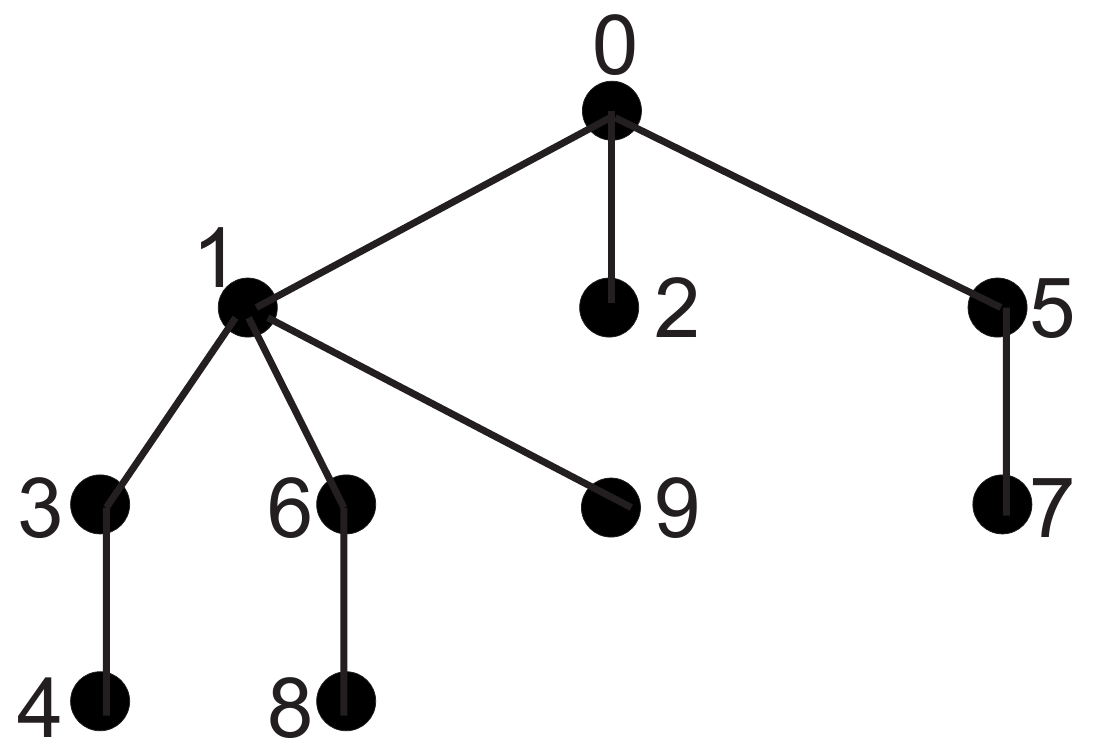}
\end{center}
\end{ex}

By the definition of $\varphi_{\M,S}(T)$, it is easy to verify the following lemma.
\begin{lemma}\label{lemma9}
 For any $T\in \mathcal{T}_n$, suppose that $\M=\{(a_1,b_1),(a_2,b_2),\ldots,(a_{k},b_{k})\}$ is the tree-matching of $T$ with $\asc^o(T)=0$.
Let $S_0=\{l\in [k]: (a_l,b_l)\text{ is an even tree-pair}\}$ and $$S_1=\{l\in [k]\mid (a_l,b_l)\text{ is an odd tree-pair}\}.$$  Then
$\#S_0+\#S_1=k$. For any $A\subseteq S_0$ and $B\subseteq S_1$, let $S=A\cup B$.
Then
\begin{align*}
\st(\varphi_{\M,S}(T))&=\st(T),~
\evenp(\varphi_{\M,S}(T))=\#S_0,\\
\des^o(\varphi_{\M,S}(T))&=\des^o(T)-\#B,~
\asc^o(\varphi_{\M,S}(T))=\#B.
\end{align*}
\end{lemma}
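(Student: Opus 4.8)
The plan is to analyze $\varphi_{\M,S}$ one tree-pair at a time, exploiting the two facts already recorded above: every factor $\varphi_{\M;(a_l,b_l)}$ fixes the tree-matching $\M$, and these factors commute. Since the tree-singletons of any $T'\in\mathbb{T}_{\M}$ are exactly the elements of $[n]_0\setminus\SSS(\M)$, they depend only on $\M$; hence $\st(\varphi_{\M,S}(T))=\st(T)$ is immediate, which gives the first identity.

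Next I would isolate the local effect of a single factor $\varphi_{\M;(a_l,b_l)}$. By construction this map leaves the whole tree fixed except that it transfers the distinguished vertex $v=\max(\Child_T(a_l)\cup\Child_T(b_l)\setminus\{b_l\})$ from $a_l$ to $b_l$, or vice versa. Two consequences are the heart of the argument. First, the child-set of $v$ itself is untouched --- only its predecessor changes --- and the child-sets of all endpoints $a_j,b_j$ with $j\neq l$ are untouched, so the classification (zero / even / odd, ascent / descent) of every pair other than $l$ is preserved, even when $v$ happens to be an endpoint of another tree-pair. Second, for pair $l$ the transfer keeps the sum $\child_T(a_l)+\child_T(b_l)$ fixed, hence preserves the parity of $\child_T(a_l)+\child_T(b_l)-1$, so pair $l$ retains its even/odd type. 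Because even/odd type is thus preserved pairwise, $\evenp$ is an invariant of $\varphi_{\M,S}$; and since the even pairs of $T$ are precisely those indexed by $S_0$, we obtain $\evenp(\varphi_{\M,S}(T))=\#S_0$.

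For the descent/ascent statistics I would observe that on a nonzero pair $l$ the map $\varphi_{\M;(a_l,b_l)}$ keeps $v$ as the maximum of the same set but interchanges its predecessor between $a_l$ and $b_l$, so it toggles that pair between descent and ascent; on a zero pair it is the identity. Now the hypothesis $\asc^o(T)=0$ enters: every odd pair of $T$ is then an odd descent pair, so $S_1$ is exactly the set of odd descent pairs and $\des^o(T)=\#S_1$. Since an odd pair is never zero (a zero pair is even), each factor indexed by $B\subseteq S_1$ genuinely toggles its pair from odd descent to odd ascent, while the factors indexed by $A\subseteq S_0$ only permute even pairs among themselves and contribute nothing to the odd counts. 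Counting the toggled pairs then yields $\asc^o(\varphi_{\M,S}(T))=\#B$ and $\des^o(\varphi_{\M,S}(T))=\#S_1-\#B=\des^o(T)-\#B$, completing all four identities.

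The step I expect to be the main obstacle is the locality claim in the second paragraph: one must check carefully that relocating $v$ alters the type of pair $l$ alone, in particular when $v$ is itself an endpoint $a_j$ or $b_j$ of some other tree-pair. The resolution is that $\varphi_{\M;(a_l,b_l)}$ changes only $v$'s predecessor and never $v$'s own children, so the child-counts governing every other pair's classification are untouched; combined with the already-established invariance of $\M$, this pins down all four statistics.
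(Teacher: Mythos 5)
Your proof is correct and is essentially the paper's own approach: the paper gives no argument beyond ``by the definition of $\varphi_{\M,S}(T)$, it is easy to verify,'' and your write-up is precisely that verification, with the right supporting observations (invariance of $\M$ handles $\st$; disjointness of the blocks of $\M$ gives the locality needed so that moving $v$ affects only pair $l$; conservation of $\child_T(a_l)+\child_T(b_l)$ preserves even/odd type, hence $\evenp$; and the descent/ascent toggle on nonzero pairs, combined with $\asc^o(T)=0$ and the fact that odd pairs are never zero pairs, yields $\des^o$ and $\asc^o$). No gaps.
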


We can now present the following result.
\begin{theorem}\label{mainthm03}
For any $n\geq 0$, we have
\begin{eqnarray*}
\sum\limits_{T\in\mathcal{T}_n}x^{\st(T)}c^{\evenp(T)}a^{\des^o(T)}b^{\asc^o(T)}=\sum_{i,j\geq 0}\theta_{n,i,j}x^{n+1-2(i+j)}c^{i}(a+b)^{j},
\end{eqnarray*}
where $\theta_{n,i,j}=\#\{T\in\mathcal{T}_n\mid \evenp(T)=i,\des^o(T)=j,\asc^o(T)=0\}$.
\end{theorem}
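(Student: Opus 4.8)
The plan is to prove the identity by partitioning $\mathcal{T}_n$ into fibers indexed by the ``canonical'' trees---those $T$ with $\asc^o(T)=0$---and then recognizing that each such fiber contributes exactly one term $x^{n+1-2(i+j)}c^{i}(a+b)^{j}$ to the left-hand side. The engine driving everything is Lemma~\ref{lemma9}: the involutions $\varphi_{\M;(a_i,b_i)}$ attached to the odd tree-pairs toggle a pair between descent and ascent while leaving $\st$, $\evenp$, and all other tree-pairs and singletons untouched.

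First I would define a retraction $\tau$ from $\mathcal{T}_n$ onto the set of canonical trees as follows: given $T$ with tree-matching $\M=\{(a_1,b_1),\ldots,(a_k,b_k)\}$, let $B_T\subseteq[k]$ be the index set of the odd \emph{ascent} tree-pairs of $T$, and put $\tau(T)=\varphi_{\M,B_T}(T)$. Because each $\varphi_{\M;(a_i,b_i)}$ flips an odd ascent pair to an odd descent pair without disturbing any other pair, $\tau(T)$ satisfies $\asc^o(\tau(T))=0$. The crux of the argument is then to show that the fiber $\tau^{-1}(T_0)$ over a canonical $T_0$ is precisely $\{\varphi_{\M,B}(T_0)\mid B\subseteq S_1\}$, where $S_1$ is the index set of the (necessarily all-descent) odd tree-pairs of $T_0$, and that $B\mapsto\varphi_{\M,B}(T_0)$ is a bijection from subsets of $S_1$ onto this fiber. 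Injectivity holds because, by Lemma~\ref{lemma9}, $B$ is recovered as the set of odd ascent pairs of $\varphi_{\M,B}(T_0)$; and since the $\varphi_{\M,B}$ are commuting involutions preserving $\M$ and the even/odd status of every pair, one checks $\tau\bigl(\varphi_{\M,B}(T_0)\bigr)=T_0$, while conversely any $T$ with $\tau(T)=T_0$ equals $\varphi_{\M,B_T}(T_0)$ with $B_T\subseteq S_1$.

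With the partition in place, I would evaluate the left-hand side fiber by fiber. Fix a canonical $T_0$ with $\evenp(T_0)=i$ and $\des^o(T_0)=j$, so $\#S_1=j$, and note the identity $2(\evenp+\des^o+\asc^o)+\st=n+1$ forces $\st(T_0)=n+1-2(i+j)$. Applying Lemma~\ref{lemma9} with $A=\emptyset$ and $S=B$ gives $\st(\varphi_{\M,B}(T_0))=\st(T_0)$, $\evenp(\varphi_{\M,B}(T_0))=i$, $\des^o(\varphi_{\M,B}(T_0))=j-\#B$, and $\asc^o(\varphi_{\M,B}(T_0))=\#B$. Summing $x^{\st}c^{\evenp}a^{\des^o}b^{\asc^o}$ over the fiber then collapses by the binomial theorem:
$$\sum_{B\subseteq S_1}x^{n+1-2(i+j)}c^{i}a^{j-\#B}b^{\#B}=x^{n+1-2(i+j)}c^{i}\sum_{t=0}^{j}\binom{j}{t}a^{j-t}b^{t}=x^{n+1-2(i+j)}c^{i}(a+b)^{j}.$$
Grouping the canonical trees according to $(\evenp,\des^o)=(i,j)$, whose count is exactly $\theta_{n,i,j}$, and summing over all fibers produces the right-hand side.

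I expect the only genuine obstacle to be the bookkeeping that establishes $B\mapsto\varphi_{\M,B}(T_0)$ as a bijection onto $\tau^{-1}(T_0)$---in particular, confirming that the odd tree-pairs behave fully independently, so distinct subsets $B$ give distinct trees and no preimage is missed. This independence, however, is precisely what Lemma~\ref{lemma9} delivers, together with the facts that the $\varphi_{\M;(a_i,b_i)}$ commute and preserve $\M$; once these are invoked, the rest is the routine binomial collapse displayed above.
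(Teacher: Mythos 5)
Your proposal is correct and follows essentially the same route as the paper: both arguments partition $\mathcal{T}_n$ into the orbits of the commuting involutions $\varphi_{\M;(a_i,b_i)}$ attached to odd tree-pairs, take the trees with $\asc^o(T)=0$ as orbit representatives, apply Lemma~\ref{lemma9}, and collapse each orbit's contribution via the binomial identity $\sum_{B\subseteq S_1}a^{j-\#B}b^{\#B}=(a+b)^j$. The only difference is presentational: where the paper simply asserts that the orbits $[T]$ partition $\mathcal{T}_n$ with a unique representative in $\NAP_{n,i,j}$, you make this explicit via the retraction $\tau$ and the bijection $B\mapsto\varphi_{\M,B}(T_0)$ onto each fiber, which is a welcome sharpening of the same argument.
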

\begin{proof}
Define $\NAP_{n,i,j}=\{T\in\mathcal{T}_n\mid \evenp(T)=i,\des^o(T)=j,\asc^o(T)=0\}$.
For any $T\in \NAP_{n,i,j}$, suppose that $\M=\{(a_1,b_1),(a_2,b_2),\ldots,(a_{i+j},b_{i+j})\}$ is the tree-matching of $T$.
Furthermore, let $S_1=\{l\in [i+j]\mid (a_l,b_l)\text{ is an odd tree-pair}\}$. Then $\#S_1=j$.

Let $[T]=\{\varphi_{\M,S}(T)\mid S\subseteq S_1\}$. For any $T'\in[T]$, suppose that $T'=\varphi_{\M,S}(T)$ for some $S\subseteq S_1$.
By Lemma~\ref{lemma9}, we get
$$\st(T')=\st(T),~\evenp(T')=\evenp(T),$$
$$\des^o(T')=\des^o(T)-\#S,~\asc^o(T')=\#S.$$
It is clear that $\{[T]\mid T\in \NAP_{n,i,j}\}$ form a partition of $\mathcal{T}_n$, since
these are the orbits of the group actions induced by the functions $\varphi_{\M,S}(T)$ and
each orbit contains a tree $T\in \NAP_{n,i,j}$ as a representative.
Hence,
\begin{eqnarray*}
&&\sum\limits_{T\in\mathcal{T}_n}x^{\st(T)}c^{\evenp(T)}a^{\des^o(T)}b^{\asc^o(T)}\\
&=&\sum\limits_{i,j\geq 0}\sum\limits_{T\in \NAP_{n,i,j}}\sum\limits_{T'\in[T]}x^{\st(T')}c^{\evenp(T')}a^{\des^o(T')}b^{\asc^o(T')}\\
&=&\sum\limits_{i,j\geq 0}\sum\limits_{T\in \NAP_{n,i,j}}\sum\limits_{S\subseteq S_1}x^{\st(\varphi_{\M,S}(T))}c^{\evenp(\varphi_{\M,S}(T))}a^{\des^o(\varphi_{\M,S}(T))}
b^{\asc^o(\varphi_{\M,S}(T))}\\
&=&\sum\limits_{i,j\geq 0}\sum\limits_{T\in \NAP_{n,i,j}}\sum\limits_{S\subseteq S_1}x^{\st(T)}c^{\evenp(T)}a^{\des^o(T)-\#S}b^{\#S}\\
&=&\sum\limits_{i,j\geq 0}\sum\limits_{T\in \NAP_{n,i,j}}x^{n+1-2(i+j)}c^{i}\sum\limits_{S\subseteq S_1}a^{j-\#S}b^{\#S}\\
&=&\sum\limits_{i,j\geq 0}\sum\limits_{T\in \NAP_{n,i,j}}x^{n+1-2(i+j)}c^{i}(a+b)^j\\
&=&\sum_{i,j\geq 0}\theta_{n,i,j}x^{n+1-2(i+j)}c^{i}(a+b)^{j}.
\end{eqnarray*}
\end{proof}

Let $\gamma_{n,i,j}$ be defined by~\eqref{recurrence002}. Then
combining Lemma~\ref{lemma02} and Theorem~\ref{mainthm03}, we obtain
\begin{align*}
\gamma_{2n,i,j}&=\theta_{2n,2j,n-i-2j},~
\gamma_{2n+1,i,j}=\theta_{2n+1,2j+1,n-i-2j}.
\end{align*}
Therefore, we get the following result.
\begin{cor}\label{Cor15}
For the $\gamma$-coefficients $\gamma_{n,i,j}$, we have
\begin{align*}
\gamma_{2n,i,j}&=\#\{T\in\mathcal{T}_{2n}\mid \evenp(T)=2j,\des^o(T)=n-i-2j,\asc^o(T)=0\},\\
\gamma_{2n+1,i,j}&=\#\{T\in\mathcal{T}_{2n+1}\mid \evenp(T)=2j+1,\des^o(T)=n-i-2j,\asc^o(T)=0\}.
\end{align*}
\end{cor}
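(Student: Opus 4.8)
The plan is to prove the corollary by writing the single polynomial $D_G^n(x)$ in two different ways and then matching coefficients; once the two expansions are lined up, the stated tree counts fall out immediately from the definition of $\theta_{n,i,j}$ given in Theorem~\ref{mainthm03}. No new combinatorics is needed beyond what is already established in Lemma~\ref{lemma02} and Theorem~\ref{mainthm03}, so the work is entirely a matter of organizing the index translation.

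First I would observe that the grammar $G_1$ of~\eqref{grammar02} is precisely the grammatical form of the operator $D_G$ of~\eqref{diff-elliptic02}, so that $D_{G_1}^n(x)=D_G^n(x)$. Substituting this into Lemma~\ref{lemma02} and then applying Theorem~\ref{mainthm03} rewrites the tree-generating function in the form
$$D_G^n(x)=\sum_{i,j\geq 0}\theta_{n,i,j}\,x^{n+1-2(i+j)}c^i(a+b)^j,$$
where $\theta_{n,i,j}=\#\{T\in\mathcal{T}_n\mid \evenp(T)=i,\des^o(T)=j,\asc^o(T)=0\}$. This is the first expansion.

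Second, I would recall the expansion~\eqref{recurrence002}, which gives $D_G^{2n}(x)=x\sum_{i,j}\gamma_{2n,i,j}x^{2i}c^{2j}(a+b)^{n-i-2j}$ and $D_G^{2n+1}(x)=c\sum_{i,j}\gamma_{2n+1,i,j}x^{2i}c^{2j}(a+b)^{n-i-2j}$. Both expansions are expressed in the family of products $x^\alpha c^\beta (a+b)^\delta$, which is linearly independent in $\mathbb{Q}[x,a,b,c]$ (viewing the ring as $\mathbb{Q}[a,b][x,c]$, the $x^\alpha c^\beta$ are distinct basis monomials, and the $(a+b)^\delta$ have distinct degrees in $a,b$), so coefficients may legitimately be compared term by term. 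In the even case I would set the $c$-exponent to $\beta=2j$ and the $(a+b)$-exponent to $\delta=n-i-2j$, which turns the $\theta$-indices into $2j$ and $n-i-2j$; the remaining $x$-exponent is then forced to agree, since every monomial of $D_G^{2n}(x)$ is homogeneous of total weight $2n+1$ under the grading $\deg x=1$, $\deg a=\deg b=\deg c=2$ (equivalently, $2n+1-2\bigl(2j+(n-i-2j)\bigr)=2i+1$). This yields $\gamma_{2n,i,j}=\theta_{2n,2j,n-i-2j}$, and the identical computation in the odd case (now with $c$-exponent $2j+1$) gives $\gamma_{2n+1,i,j}=\theta_{2n+1,2j+1,n-i-2j}$. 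Unfolding the definition of $\theta$ then produces the two displayed tree counts.

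I do not expect a genuine obstacle: the substance lives in Lemma~\ref{lemma02} and Theorem~\ref{mainthm03}, and the corollary is a bookkeeping consequence. The one point I would state explicitly, to avoid the appearance of an over-determined system, is that matching only the $c$-exponent and the $(a+b)$-exponent already forces the $x$-exponent by homogeneity. The only subtlety is the parity split: the $c$-exponent in the $\gamma$-expansion is even ($2j$) for even $n$ and odd ($2j+1$) for odd $n$, and it is exactly this parity that pins down the first index of $\theta$ in each case.
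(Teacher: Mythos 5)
Your proposal is correct and takes essentially the same approach as the paper: the paper also deduces the corollary by combining Lemma~\ref{lemma02} and Theorem~\ref{mainthm03} with the expansion~\eqref{recurrence002} and matching coefficients to obtain $\gamma_{2n,i,j}=\theta_{2n,2j,n-i-2j}$ and $\gamma_{2n+1,i,j}=\theta_{2n+1,2j+1,n-i-2j}$. Your explicit justifications of the index bookkeeping --- the linear independence of the products $x^{\alpha}c^{\beta}(a+b)^{\delta}$ and the homogeneity argument forcing the $x$-exponent --- simply spell out details that the paper leaves implicit.
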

\section{Concluding remarks}
In this paper we obtain a fundamental property of the Jacobian elliptic functions. By using the theory of $\gamma$-positivity,
we get that $J_n(x)$ are unimodal for all $n\geq 0$.
A combinatorial proof of the unimodality of $J_n(x)$ would be interesting.

\end{document}